\numberwithin{equation}{theorem}
\renewcommand{\m}{\mathfrak{m}}
\DeclareMathOperator{\gr}{gr}
\def\m{\mathfrak{m}}
\def\b{\mathfrak{b}}
\def\a{\mathfrak{a}}
\let\oldFootnote\footnote
\newcommand\nextToken\relax
\renewcommand\footnote[1]{%
    \oldFootnote{#1}\futurelet\nextToken\isFootnote}
\newcommand\isFootnote{%
    \ifx\footnote\nextToken\mathrmsuperscript{,}\fi}
\theoremstyle{theorem}
\newtheorem{theoremx}{Theorem}
\newtheorem{corollaryx}[theoremx]{Corollary}
\begin{document}
\title[{$F$-thresholds and  $F$-rationality}]{On the behavior of $F$-thresholds with respect to the fibers of blow-ups and  $F$-rationality}

\author{Ghazaleh FakhriVaighan}
\address{Centro de Investigaci\'on en Matem\'aticas, A.C., Callej\'on Jalisco s/n, 36024 Col. Valenciana, Guanajuato, Gto, M\'exico}
\email{\href{mailto:ghazaleh.fakhrivaighan@cimat.mx}{ghazaleh.fakhrivaighan@cimat.mx}}

\keywords{Associated graded algebra, $F$-thresholds,  $F$-rationality}

\thanks{The author acknowledges the support of SECIHTI Fellowship \#1321301 and Grant CBF 2023-2024-224. }

\subjclass[]{13A30, 13A35, 14B05}


\begin{abstract}

We establish a general inequality comparing the $F$-thresholds of a local ring and its associated graded ring. As an application, we deduce that the $F$-rationality of the graded ring descends to the local ring.
\end{abstract}
\maketitle

\section{Introduction}

The study of singularities lies at the crossroads of commutative algebra and algebraic geometry. In positive characteristic, singularities is captured by a family of Frobenius-based concepts referred to as $F$-singularities. Among these, $F$-pure \cite{HR, Fed} and $F$-regular singularities \cite{HHS, HHT, HHP, FW} play a central role, reflecting vanishing and splitting phenomena. To quantify their behavior, the principal numerical invariants are the $F$-thresholds \cite{MTW, HMTW}, which capture the severity of a singular point through the action of Frobenius powers.

The concept of $F$-thresholds was first introduced by Mustaţă, Takagi, and Watanabe \cite{MTW}, and further developed in their joint work with Huneke \cite{HMTW}. The $F$-threshold of $\mathfrak{a}$ with respect to $J$ denoted by $c^{J}(\mathfrak{a})$ is defined in terms of a limit which was shown to exist in full generality later by De Stefani, Núñez-Betancourt, and Pérez \cite{DSNBP}. These invariants capture subtle asymptotic data about the Frobenius powers of ideals. They are intertwined with tight closure, Hilbert–Samuel multiplicity, and integral closure, and thus provide a flexible bridge between Frobenius methods and numerical invariants of singularities.

Given its importance, a natural and pressing question is how $F$-thresholds behaves under standard constructions. One such construction is the blowup of a local ring at its maximal ideal. If $(R,\mathfrak{m})$ is a Noetherian local ring, the blowup of $\mathrm{Spec}(R)$ at $\mathfrak{m}$ is
$$
\Proj\Bigg(\bigoplus_{i\geq 0}\mathfrak{m}^i\Bigg),
$$
and its special fiber is precisely
$$
\Proj \big(\mathrm{gr}_\mathfrak{m}(R)\big),$$ where 
$$
\mathrm{gr}_\mathfrak{m}(R) = \bigoplus_{i\geq 0}\mathfrak{m}^i / \mathfrak{m}^{i+1}
$$
is the associated graded ring of $R$ with respect to $\mathfrak{m}$. This graded ring encodes the asymptotic structure of the powers of $\mathfrak{m}$, and thus reflects deep information about the singularity of $(R,\mathfrak{m})$.


Given an $\mathfrak m$-primary ideal in a local ring, it is natural to ask how the corresponding $F$-threshold compares with the $F$-threshold of its degeneration to the associated graded ring? 

To make this precise, let $\mathfrak b\subseteq R$ be an $\mathfrak m$-primary ideal, and let
\[
\mathrm{gr}_\mathfrak m(\mathfrak b)
  \;=\;
  \bigoplus_{i\ge 0}
  \frac{\mathfrak b\cap \mathfrak m^i}{\mathfrak b\cap \mathfrak m^{i+1}}
\]
denote its initial ideal in the associated graded ring $\mathrm{gr}_\mathfrak m(R)$. This question is addressed by the following theorem.

\begin{theoremx}[{\autoref{20}}]\label{slm}
 Let $(R, \mathfrak{m}, k)$ be a local $F$-finite ring of prime characteristic $p>0$, and $\mathfrak{b}\subseteq R$ be an $\mathfrak{m}$-primary ideal. Then,
$$c^{\mathfrak{b}}(\mathfrak{m}) \leq c^{I}(\mathfrak{n}),$$ where $\mathfrak{n}$ is the maximal homogeneous ideal of the associated graded ring  $\mathrm{gr}_{\mathfrak{m}}(R)$ and $I=\mathrm{gr}_\m(\b)$.

\end{theoremx}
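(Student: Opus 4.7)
The plan is to compare the two $F$-thresholds by reducing to the inequality $\nu_{\mathfrak m}^{\mathfrak b}(p^e)\le \nu_{\mathfrak n}^{I}(p^e)$ for every $e\ge 1$, after which dividing by $p^e$ and passing to the limit yields the theorem. Here $\nu_{\mathfrak m}^{\mathfrak b}(p^e)=\max\{r:\mathfrak m^r\not\subseteq \mathfrak b^{[p^e]}\}$, and similarly for $\nu_{\mathfrak n}^I(p^e)$. Equivalently, I will prove the contrapositive implication: if $\mathfrak n^r\subseteq I^{[p^e]}$ inside $\gr_{\mathfrak m}(R)$, then $\mathfrak m^r\subseteq \mathfrak b^{[p^e]}$ inside $R$.

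The first ingredient I would establish is the inclusion of graded ideals
\[
I^{[p^e]}\;\subseteq\; \gr_{\mathfrak m}(\mathfrak b^{[p^e]}).
\]
This is routine: $I=\gr_{\mathfrak m}(\mathfrak b)$ is generated by initial forms $b^{\ast}$ of elements $b\in \mathfrak b$, and in characteristic $p$ the Frobenius power $(b^{\ast})^{p^e}$ equals the image of $b^{p^e}$ in the appropriate graded piece, which lies in $\gr_{\mathfrak m}(\mathfrak b^{[p^e]})$ since $b^{p^e}\in \mathfrak b^{[p^e]}$.

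With this in hand, the heart of the argument is a successive-approximation procedure in the $\mathfrak m$-adic topology. Assume $\mathfrak n^r\subseteq I^{[p^e]}$ and take $x\in \mathfrak m^r$. Let $s=\ord_{\mathfrak m}(x)\ge r$, so that $x^{\ast}\in \mathfrak n^s\subseteq \mathfrak n^r\subseteq I^{[p^e]}\subseteq \gr_{\mathfrak m}(\mathfrak b^{[p^e]})$. Unravelling the definition of $\gr_{\mathfrak m}(\mathfrak b^{[p^e]})$ in degree $s$ produces $y_1\in \mathfrak b^{[p^e]}\cap \mathfrak m^s$ with $x-y_1\in \mathfrak m^{s+1}\subseteq \mathfrak m^r$. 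Applying the same argument to $x-y_1$, and iterating, I obtain elements $y_1,\ldots,y_N\in \mathfrak b^{[p^e]}$ with $x-(y_1+\cdots+y_N)\in \mathfrak m^{s+N}$ for every $N$. Thus
\[
x\;\in\; \bigcap_{N\ge 0}\bigl(\mathfrak b^{[p^e]}+\mathfrak m^{s+N}\bigr)\;=\;\mathfrak b^{[p^e]},
\]
the last equality being Krull's intersection theorem together with closedness of ideals in the $\mathfrak m$-adic topology of a Noetherian local ring. Since $x\in \mathfrak m^r$ was arbitrary, $\mathfrak m^r\subseteq \mathfrak b^{[p^e]}$.

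The main subtlety I anticipate is the bookkeeping in the iteration: at each step the new remainder lands in a strictly higher power of $\mathfrak m$, so I must verify that its initial form still lies in $\mathfrak n^r$ (which it does automatically, since $\mathfrak m^{s+k}\subseteq \mathfrak m^r$ for all $k\ge 0$). The other ingredients, namely the compatibility of Frobenius powers with initial forms and the passage from the finite-$e$ inequality to the limit, are standard.
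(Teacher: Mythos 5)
Your proof is correct, and while it rests on the same core iterative-approximation idea as the paper's argument, it packages it in a way that is noticeably cleaner and slightly more robust. The paper first reduces to the case where $R$ is complete, then chooses generators $x_1,\dots,x_s$ of $\mathfrak b$ whose initial forms generate $I$ (a step that itself requires a small lemma to justify), expresses $f\in\mathfrak m^t$ modulo $\mathfrak m^{t+k}$ as a combination of the $x_i^{p^e}$ at each stage, and finally sums the resulting coefficient series using completeness. You sidestep all of this: you prove the containment $I^{[p^e]}\subseteq \mathrm{gr}_{\mathfrak m}(\mathfrak b^{[p^e]})$ abstractly (no choice of generators needed), run the approximation directly on the element $x$ rather than on its coefficients, and close the argument with Krull's intersection theorem --- that is, the $\mathfrak m$-adic closedness of the ideal $\mathfrak b^{[p^e]}$, which holds in any Noetherian local ring --- rather than by passing to $\widehat R$. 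What your route buys is the elimination of the reduction to the complete case and of the unstated lemma that a set of elements whose initial forms generate $\mathrm{gr}_{\mathfrak m}(\mathfrak b)$ must generate $\mathfrak b$; what it costs is essentially nothing, since the key identity $(b^\ast)^{p^e}=(b^{p^e})^\ast$ (or rather, that $(b^\ast)^{p^e}$ is the image of $b^{p^e}$ in the graded piece of the appropriate degree, so that it lands in $\mathrm{gr}_{\mathfrak m}(\mathfrak b^{[p^e]})$) is immediate from the definition of multiplication in the associated graded ring. Both proofs are valid; yours is arguably the preferred exposition.
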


This inequality holds without assuming Cohen--Macaulayness or the Gorenstein property, and it allows one to transfer Frobenius numerical data from the graded fiber back to the local ring.

As a consequence of our comparison theorem for $F$-thresholds, we obtain a corollary concerning $F$-rationality (see \autoref{21}). Although the statement might have been known to experts (see \autoref{otherproof}), we  could not find it in the literature until we were made aware of Ajit and Simper's work on test modules of extended Rees algebras \cite[Theorem 6.1(2) and Remark 6.2]{AS}. Our argument provides a novel proof that proceeds through the behavior of $F$-thresholds. Although our proof is longer, for the sake of completeness, we provide proofs of statements that are known for graded rings, but that we could not find in the literature (see \autoref{2}, \autoref{3},  \autoref{PropGenParameterIdeal}), and \autoref{22}).

A result of Huneke, Mustaţă, Takagi, and Watanabe \cite{HMTW} characterizes $F$-rationality via $F$-thresholds: a local ring $(R,\mathfrak{m})$ is $F$-rational if and only if the $F$-threshold of its maximal ideal is strictly less than $\dim(R)$. This numerical characterization transforms the problem of detecting $F$-rationality into a problem about Frobenius asymptotics, by passing the need to test the tight closure of parameter ideals directly. Combining \autoref{slm} with the $F$-threshold characterization of $F$-rationality, we obtain the following consequence of our main theorem.

\begin{corollaryx}[{\autoref{21}}]
Let $ (R, \mathfrak{m}, k)$ be a $d $-dimensional $F$-finite Noetherian local domain of characteristic $p > 0$. If the associated graded ring  $\mathrm{gr}_{\mathfrak{m}}(R)$ is $F$-rational, then $R$ is $F$-rational.
\end{corollaryx}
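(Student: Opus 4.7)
The plan is to combine \autoref{slm} with the $F$-threshold characterization of $F$-rationality of Huneke--Mustaţă--Takagi--Watanabe cited in the introduction. That characterization asserts (in the good situations at hand) that $F$-rationality is equivalent to a strict upper bound of the form $c^{\mathfrak{b}}(\mathfrak{m}) < d := \dim R$ with $\mathfrak{b}$ a suitable parameter ideal, and analogously for $\gr_{\mathfrak{m}}(R)$ at its irrelevant maximal ideal $\mathfrak{n}$. The strategy is therefore to exhibit a parameter ideal $\mathfrak{b}\subseteq R$ whose initial ideal $I := \gr_{\mathfrak{m}}(\mathfrak{b})$ is itself a homogeneous parameter ideal in $\gr_{\mathfrak{m}}(R)$, so that \autoref{slm} yields
\[
c^{\mathfrak{b}}(\mathfrak{m}) \;\le\; c^{I}(\mathfrak{n}) \;<\; d,
\]
with the strict inequality coming from the assumed $F$-rationality of $\gr_{\mathfrak{m}}(R)$.

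First I would harvest Cohen--Macaulayness. Since $F$-rationality implies Cohen--Macaulayness, the hypothesis forces $\gr_{\mathfrak{m}}(R)$ to be Cohen--Macaulay at $\mathfrak{n}$, and this descends to $R$ by the classical fact that Cohen--Macaulayness of the associated graded ring lifts to the local ring. If needed, I would then perform the harmless faithfully flat extension $R \to R(t)$ so that the residue field becomes infinite --- a move that preserves both the associated graded ring and the relevant $F$-thresholds --- and pick a minimal reduction $\mathfrak{b} = (x_1, \dots, x_d)$ of $\mathfrak{m}$ whose generators form a superficial sequence. The preparatory results flagged in the introduction (in particular \autoref{PropGenParameterIdeal} and \autoref{22}) then guarantee that the initial forms $\overline{x_1}, \dots, \overline{x_d}$ generate a homogeneous parameter ideal $I = \gr_{\mathfrak{m}}(\mathfrak{b})$ in $\gr_{\mathfrak{m}}(R)$ of the correct dimension.

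With $\mathfrak{b}$ and $I$ so chosen, the HMTW criterion applied to the $F$-rational ring $\gr_{\mathfrak{m}}(R)_{\mathfrak{n}}$ gives $c^{I}(\mathfrak{n}) < d$. Chaining with \autoref{slm} produces $c^{\mathfrak{b}}(\mathfrak{m}) < d$, and the converse direction of the HMTW criterion, applied now to the Cohen--Macaulay local ring $R$ and its parameter ideal $\mathfrak{b}$, concludes that $R$ is $F$-rational.

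I expect the principal technical obstacle to be the verification that $\gr_{\mathfrak{m}}(\mathfrak{b})$ is genuinely a parameter ideal in $\gr_{\mathfrak{m}}(R)$ when $\mathfrak{b}$ is a minimal reduction of $\mathfrak{m}$ generated by a superficial sequence; this is the key local-to-graded compatibility, and is exactly what the preparatory lemmas cited in the introduction are designed to supply. A secondary point requiring care is that the HMTW criterion is phrased for local rings, so one must apply it to $\gr_{\mathfrak{m}}(R)_{\mathfrak{n}}$ and check that the $F$-thresholds are insensitive to this localization --- a standard fact in the graded setting. Once both points are secured, the two-step chain of inequalities and the HMTW criterion close the argument.
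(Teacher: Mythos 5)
The high-level outline (chain \autoref{slm} with the HMTW criterion, reducing first to infinite residue field, then using Cohen--Macaulayness and a superficial/minimal-reduction system of parameters) matches the paper, but the logical form in which you invoke the HMTW criterion is wrong, and this breaks the argument. The HMTW result (\autoref{7}) says: fixing a parameter ideal $J$, $R$ is $F$-rational if and only if $c^{I}(J)<d$ \emph{for every} ideal $I\supsetneq J$ --- note the parameter ideal sits in the \emph{bottom} slot and the strict upper bound must be checked over all ideals strictly containing $J$, not for a single pair. Your proposal instead tries to certify $F$-rationality from the single inequality $c^{\mathfrak{b}}(\mathfrak{m})<d$ with $\mathfrak{b}$ a parameter ideal (so $\mathfrak{b}$ sits on top). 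This inequality never holds: if $\mathfrak{b}$ is a system of parameters then, by \autoref{9}(b) and the first half of \autoref{7} (applied with $I=J=\mathfrak{b}$, where $\mathfrak{b}\subseteq\mathfrak{b}^{*}$ is automatic), one gets $c^{\mathfrak{b}}(\mathfrak{m})\ge c^{\mathfrak{b}}(\mathfrak{b})=d$. The same problem recurs on the graded side, where you want $c^{I}(\mathfrak{n})<d$ with $I$ a homogeneous parameter ideal; again $c^{I}(\mathfrak{n})\ge c^{I}(I)=d$.

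The paper avoids this by taking $\mathfrak{a}$ to be a minimal reduction of $\mathfrak{m}$ coming from a degree-one parameter ideal $J$ of $A=\mathrm{gr}_{\mathfrak{m}}(R)$, and then checking the HMTW criterion in its correct universally-quantified form: for \emph{every} ideal $\mathfrak{b}\supsetneq\mathfrak{a}$ one sets $I=\mathrm{gr}_{\mathfrak{m}}(\mathfrak{b})$, uses $\bar{\mathfrak{a}}=\mathfrak{m}$ and $\bar{J}=\mathfrak{n}$ (\autoref{6}) with \autoref{9} to replace the bottoms by the maximal ideals, applies \autoref{slm} to get $c^{\mathfrak{b}}(\mathfrak{a})=c^{\mathfrak{b}}(\mathfrak{m})\le c^{I}(\mathfrak{n})=c^{I}(J)$, and crucially invokes \autoref{22} to see that $\mathfrak{a}\subsetneq\mathfrak{b}$ forces $J=\mathrm{gr}_{\mathfrak{m}}(\mathfrak{a})\subsetneq I$ --- only then can one apply \autoref{7} to the $F$-rational ring $A$ to obtain $c^{I}(J)<d$. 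Your sketch does not vary $\mathfrak{b}$ at all, never compares $\mathrm{gr}_{\mathfrak{m}}(\mathfrak{a})$ and $\mathrm{gr}_{\mathfrak{m}}(\mathfrak{b})$, and hence never produces the strict containment on the graded side that the HMTW criterion actually requires. You would need to restructure the argument around the quantified version of \autoref{7}, with the parameter ideal in the bottom slot, and add the step ensuring that strict containment in $R$ is preserved by passing to initial ideals.
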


\subsection*{Acknowledgments} 
The author thanks Javier~Carvajal-Rojas and Luis~Núñez-Betancourt for helpful conversations. The author also thanks Alessandro~De Stefani for helpful comments regarding $F$-rationality.

\section{Notations and Preliminaries}\label{notation}

In this section, we recall the main notions and terminology used throughout the paper.  Let $R$ be a Noetherian ring of characteristic $p > 0$. For any $e > 0$, the $e$-th iteration of the Frobenius map, $F^e : R \to R$, is defined by $F^e(f) = f^{p^e}$ for $f \in R$. This map gives $R$ a new $R$-module structure via restriction of scalars, which we denote by $F^e_*(R)$ and its elements by $F_*^e(f)$. The operations in this module satisfy:
$F^e_*(f_1) + F^e_*(f_2) = F^e_*(f_1 + f_2)$, and
$f_1F^e_*(f_2) = F^e_*(f_1^{p^e} f_2)$ for all $f_1, f_2 \in R$.

We begin by recalling some standard notions from the theory of $F$-singularities, which are used throughout our later discussions.

\begin{definition}

Let $R$ be a Noetherian ring of prime characteristic $p>0$. 
\begin{enumerate}
\item We say that $R$ is $F$-finite if $F^e_*(R)$ is a finitely generated $R$-module for some (equivalently, all) $e > 0$.
\item We say that $R$ is $F$-pure if the Frobenius map $F^e: R \to F_*^eR$ is pure for some (equivalently, all) $e > 0$.
\item We say that $R$ is strongly $F$-regular if it is reduced, and for every $c \in R\setminus \bigcup_{\mathfrak{p}\in \mathrm{Min}(R)} \mathfrak{p}$ there exists $e \in \mathbb{Z}_{>0}$ and an $R$-linear map $\phi: F^e_*(R) \to R$ such that $F^e_*(c)\mapsto 1$.
\item The tight closure $I^*$ of $I$ is defined to be the
ideal of $R$ consisting of all elements $x \in R$ for which there exists $c \in R$ such that $cx^q \in I^{[q]}$ for all large $q = p^e$.
\item We say that $R$ is $F$-rational if $J^*= J$ for every ideal $J \subseteq R$ generated by parameters. If $R$ is an excellent equidimensional local ring, then $R$ is $F$-rational if
and only if $I = I^*$ for some ideal $I$ generated by a full system of parameters for $R$.
\end{enumerate}

\end{definition}

We next consider two numerical invariants used to study singularities in characteristic $p$: the $F$-pure threshold and the $F$-threshold. These invariants provide a framework for analyzing how singularities interact with the Frobenius endomorphism.

\begin{definition}[\cite{AE}] Let $(R, \mathfrak{m}, k)$ be either a local ring or a standard graded $k$-algebra. For each $e \in \mathbb{N}$, we set
\[
I_e = \left\{ f \in R \mid \psi(F^e_* f) \in \mathfrak{m} \,\text{ for all $R$-linear maps} \,\,\psi : F^e_* R \to R \right\}.
\]  
In particular, $I_0=\mathfrak{m}$.
\end{definition}

\begin{definition}[\cite{TW}]\label{fpt}
Let $(R, \mathfrak{m}, k)$ be either a local ring or a standard graded $k$-algebra. Assume that $R$ is an $F$-pure ring. For $e \in \mathbb{N},$ we associate to the ideals $I_e$ the following integers:
\[
b_{\mathfrak{a}}(p^e)= \max \left\{ t \in \mathbb{N} \mid \mathfrak{a}^t \nsubseteq I_e \right\}.
\]  
Given a proper ideal $\mathfrak{a} \subseteq R$, homogeneous when $R$ is graded, we define the $F$-pure threshold of $\mathfrak{a}$ in $R$ as  
\[
\mathrm{fpt}(\mathfrak{a})= \lim_{e \rightarrow\infty} \frac{b_{\mathfrak{a}}(p^e)}{p^e}.
\]
When $\mathfrak{a} = \mathfrak{m}$, the $F$-pure threshold $\mathrm{fpt}(\mathfrak{m})$ is often simply denoted by $\mathrm{fpt}(R)$.
\end{definition}

\begin{definition}[\cite{HMTW,DSNBP}]
Let $R$ be a ring of prime characteristic $p$. For ideals $\mathfrak{a}, J \subseteq R$ satisfying $\mathfrak{a} \subseteq \sqrt{J}$, and a non-negative integer $e$, the \emph{$F$-threshold} of $\mathfrak{a}$ with respect to $J$ is defined as
\[
c^{J}(\mathfrak{a})= \lim_{e \rightarrow\infty} \frac{\nu^{J}_{\mathfrak{a}}(p^e)}{p^e},
\] where 
\[
\nu^J_{\mathfrak{a}}(p^e)= \max \left\{ t \in \mathbb{N} \mid \mathfrak{a}^t \not\subseteq {J}^{[p^e]} \right\}.
\]
\end{definition}

The following theorem collects some basic monotonicity properties of $F$-thresholds.

\begin{theorem}[{\cite[Proposition 1.7]{MTW}}]\label{9}
Let $\mathfrak{a}$, $J$ be ideals as above.
\begin{enumerate}
\item If $I \supseteq  J$, then $c^I(\mathfrak{a}) \leq c^J(\mathfrak{a}).$
\item If $\mathfrak{b} \subseteq \mathfrak{a}$, then $c^J(\mathfrak{b}) \leq c^J(\mathfrak{a})$. Moreover, if $\mathfrak{a} \subseteq \bar{\mathfrak{b}}$, where $\bar{\mathfrak{b}}$ denotes the integral closure of $\mathfrak{b}$, then $c^J(\mathfrak{b}) = c^J(\mathfrak{a})$.
\end{enumerate}
\end{theorem}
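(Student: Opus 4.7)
The plan is to handle the two parts separately, noting that each reduces to comparing the integers $\nu^{J}_{\mathfrak{a}}(p^e)$ termwise before passing to the limit.

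For part (a), I would observe that $I \supseteq J$ implies $I^{[p^e]} \supseteq J^{[p^e]}$ for every $e \ge 0$. Consequently, any exponent $t$ with $\mathfrak{a}^{t} \not\subseteq I^{[p^e]}$ also satisfies $\mathfrak{a}^{t} \not\subseteq J^{[p^e]}$. Taking maxima yields $\nu^{I}_{\mathfrak{a}}(p^e) \leq \nu^{J}_{\mathfrak{a}}(p^e)$; dividing by $p^e$ and passing to the limit (which exists by \cite{DSNBP}) gives $c^{I}(\mathfrak{a}) \le c^{J}(\mathfrak{a})$.

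For the first assertion of (b), if $\mathfrak{b}\subseteq \mathfrak{a}$ then $\mathfrak{b}^{t}\subseteq \mathfrak{a}^{t}$, so $\mathfrak{b}^{t}\not\subseteq J^{[p^e]}$ forces $\mathfrak{a}^{t}\not\subseteq J^{[p^e]}$; the same termwise argument as above gives $\nu^{J}_{\mathfrak{b}}(p^e)\leq \nu^{J}_{\mathfrak{a}}(p^e)$, hence $c^{J}(\mathfrak{b})\leq c^{J}(\mathfrak{a})$. For the reverse inequality under the hypothesis $\mathfrak{a}\subseteq \overline{\mathfrak{b}}$, the key input is a uniform Artin--Rees type bound for integral closures: there exists an integer $c_{0}\geq 0$ such that $\overline{\mathfrak{b}^{n}}\subseteq \mathfrak{b}^{n-c_{0}}$ for all $n\geq c_{0}$. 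This follows from the fact that the integral closure $\bigoplus_{n\geq 0}\overline{\mathfrak{b}^n}$ is a finitely generated module over the Rees algebra $\bigoplus_{n\geq 0}\mathfrak{b}^{n}$ in the settings of interest (for instance, when $R$ is analytically unramified, which is the case for the $F$-finite rings appearing in this paper). Using $\mathfrak{a}\subseteq \overline{\mathfrak{b}}$, one has $\mathfrak{a}^{t}\subseteq \overline{\mathfrak{b}^{t}}\subseteq \mathfrak{b}^{t-c_{0}}$ for $t \ge c_0$, so whenever $\mathfrak{a}^{t}\not\subseteq J^{[p^e]}$ the same holds for $\mathfrak{b}^{t-c_{0}}$. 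Therefore $\nu^{J}_{\mathfrak{b}}(p^e)\geq \nu^{J}_{\mathfrak{a}}(p^e)-c_{0}$, and after dividing by $p^{e}$ the additive constant $c_{0}/p^{e}$ vanishes in the limit, giving $c^{J}(\mathfrak{b}) \ge c^{J}(\mathfrak{a})$. Combined with the first inequality, equality follows.

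The main obstacle is the integral-closure comparison $\overline{\mathfrak{b}^{n}}\subseteq \mathfrak{b}^{n-c_{0}}$, which is the only non-formal ingredient; everything else is bookkeeping with the defining inequalities. Once that Rees-algebra finiteness is invoked, both parts reduce to elementary monotonicity of $\nu^{J}_{\mathfrak{a}}(p^e)$ together with the existence of the limit defining $c^{J}(\mathfrak{a})$.
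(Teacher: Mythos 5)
Your proof is correct, and both the termwise monotonicity of $\nu^J_{\mathfrak a}(p^e)$ and the strategy of absorbing a bounded shift $c_0$ into the $p^e\to\infty$ limit are exactly what one wants. The one non-formal ingredient — producing an integer $c_0$ with $\mathfrak a^{t}\subseteq\mathfrak b^{\,t-c_0}$ for $t\ge c_0$ — you obtain via module-finiteness of $\bigoplus_n\overline{\mathfrak b^n}$ over the Rees algebra, which requires $R$ analytically unramified (fine in the $F$-finite reduced cases this paper actually uses, by excellence). There is, however, a lighter route that needs no hypothesis on $R$ beyond Noetherianity: the standing assumption in part (b) already gives $\mathfrak b\subseteq\mathfrak a\subseteq\overline{\mathfrak b}$, which is precisely the statement that $\mathfrak b$ is a reduction of $\mathfrak a$. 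Hence there is $n_0$ with $\mathfrak a^{n+1}=\mathfrak b\,\mathfrak a^{n}$ for all $n\ge n_0$, and iterating yields $\mathfrak a^{n_0+m}=\mathfrak b^{m}\mathfrak a^{n_0}\subseteq\mathfrak b^{m}$, i.e.\ $\mathfrak a^{t}\subseteq\mathfrak b^{\,t-n_0}$ for $t\ge n_0$. That is the same shifted containment you derive, obtained purely from reduction theory (and, in the regular local setting of the cited reference, one could also invoke Brian\c{c}on--Skoda). The paper itself only cites \cite[Proposition 1.7]{MTW} and gives no proof, so there is nothing internal to compare against; but your argument is sound, and the reduction-theoretic bound would make it both shorter and uniformly applicable.
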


\begin{definition}
Let $(R,\mathfrak{m},k)$ be a Noetherian local ring. The associated graded ring of $R$ with respect to its maximal ideal $\mathfrak{m}$ is the graded ring
$$\mathrm{gr}_\mathfrak{m}(R)=\bigoplus_{i=0}^{\infty}\mathfrak{m}^i/\mathfrak{m}^{i+1},$$ with maximal ideal $\mathfrak{n}=\bigoplus_{i=1}^{\infty}\mathfrak{m}^i/\mathfrak{m}^{i+1}$. 

For $a\in \mathfrak m^i$, we denote by
$$
[a]= a+\mathfrak m^{i+1}\in \mathfrak m^i/\mathfrak m^{i+1}
$$
the coset of $a$ in the $i$-th graded piece. 

The $\mathfrak m$-adic order of a nonzero element $a\in R$ is defined as
$$
\ord_\mathfrak m(a)=\max\{\,r\ge 0 \mid a\in \mathfrak m^r\,\}.
$$
Equivalently, $\ord_\mathfrak m(a)=r$ if and only if $a\in \mathfrak m^r\setminus \mathfrak m^{r+1}$, and we set $\ord_\mathfrak m(0)=+\infty$.

For a element $a\in R$, if $a\in\mathfrak m^r\setminus\mathfrak m^{r+1}$, its initial form is
$$
\mathrm{in}(a)=a+\mathfrak m^{r+1}\in \mathfrak m^r/\mathfrak m^{r+1},
$$
where $\mathrm{ord}_m(a)=r$.

Note that if $r=\ord_{\m}(a)$, then $\mathrm{in}(a)=[a]\in \m^{r}/\m^{r+1}$ is nonzero. However, if $i<\ord_{\m}(a)$, then $[a]=0$ in $\m^{i}/\m^{i+1}$.

For an ideal $\mathfrak a\subseteq R$, the initial ideal is the homogeneous ideal
$$
\mathrm{in}(\mathfrak a)=(\,\mathrm{in}(a)\mid a\in \mathfrak a\,)\subseteq \mathrm{gr}_{\mathfrak m}(R).
$$
In particular, if $\mathfrak a=(x_1,\dots,x_n)$, then $\mathrm{in}(\mathfrak a)=(\mathrm{in}(x_1),\dots,\mathrm{in}(x_n))$.
\end{definition}

A comparison of the Frobenius invariants of a local ring and its associated graded ring requires an understanding of the behavior of ideals in the associated graded setting. The remark below makes this precise by introducing the initial ideal.

\begin{remark}
Let $(R, \mathfrak{m}, k)$ be a local ring, and let $A = \mathrm{gr}_{\mathfrak{m}}(R) = \bigoplus_{i=0}^\infty \mathfrak{m}^i / \mathfrak{m}^{i+1}$ be its associated graded ring. Let $\mathfrak{a} \subseteq R$ be an ideal. The initial ideal of $\mathfrak{a}$ in $A$ is defined as
\[
\mathrm{gr}_{\mathfrak{m}}(\mathfrak{a}) = \bigoplus_{i=0}^\infty \frac{\mathfrak{a} \cap \mathfrak{m}^i}{\mathfrak{a} \cap \mathfrak{m}^{i+1}}.
\]
For each $i \geq 0$, define two $k$-modules:
\[
V_i = \frac{\mathfrak{a} \cap \mathfrak{m}^i}{\mathfrak{a} \cap \mathfrak{m}^{i+1}}, \quad 
W_i = \frac{\mathfrak{a} \cap \mathfrak{m}^i + \mathfrak{m}^{i+1}}{\mathfrak{m}^{i+1}} \subseteq \frac{\mathfrak{m}^i}{\mathfrak{m}^{i+1}}.
\]\[
\phi_i: \mathfrak{a} \cap \mathfrak{m}^i \to \frac{\mathfrak{a} \cap \mathfrak{m}^i + \mathfrak{m}^{i+1}}{\mathfrak{m}^{i+1}}, \quad x \mapsto x + \mathfrak{m}^{i+1}.
\]
Note that $\phi_i$ is surjective with kernel $\mathfrak{a} \cap \mathfrak{m}^{i+1}.$
Hence, by the First Isomorphism Theorem, we have
\[
\frac{\mathfrak{a} \cap \mathfrak{m}^i}{\mathfrak{a} \cap \mathfrak{m}^{i+1}} \cong \frac{\mathfrak{a} \cap \mathfrak{m}^i + \mathfrak{m}^{i+1}}{\mathfrak{m}^{i+1}},
\]
so $V_i \cong W_i$ for all $i$. Therefore,
\[
\mathrm{gr}_\mathfrak{m}(\mathfrak{a}) = \bigoplus_{i=0}^\infty V_i \cong \bigoplus_{i=0}^\infty W_i = \bigoplus_{i=0}^\infty \frac{\mathfrak{a} \cap \mathfrak{m}^i + \mathfrak{m}^{i+1}}{\mathfrak{m}^{i+1}}\cong \bigoplus_{i=0}^\infty \frac{\mathfrak{a} \cap \mathfrak{m}^i + \mathfrak{m}^{i+1}}{\mathfrak{m}^{i+1}}.
\]
We note that $\mathrm{gr}_\mathfrak{m}(\mathfrak{a})$ is an ideal in $\mathrm{gr}_\mathfrak{m}(R)$.

\end{remark}

We record the following standard identification for later use.

\begin{remark}

Let $(R,\mathfrak m,k)$ be a local ring, and let $\mathfrak a\subseteq R$ be an ideal. For $x\in \mathfrak a$, let $\mathrm{ord}_{\mathfrak m}(x)=i$. We claim that
$
\mathrm{in}(\mathfrak a)=\mathrm{gr}_{\mathfrak m}(\mathfrak a)
$ as graded submodules of $\mathrm{gr}_{\mathfrak m}(R)$. Fix $i\geq 0$. The degree-$i$ component of $\mathrm{in}(\mathfrak a)$ is given by
$$
\big [\mathrm{in}(\mathfrak a)\big ]_i \;=\; \langle \mathrm{in}(x)\in \mathfrak m^i/\mathfrak m^{i+1} \mid x\in \mathfrak a,\ \mathrm{ord}_{\mathfrak m}(x)=i\rangle.
$$

On the other hand, consider the natural map
$$
\phi_i:\ \frac{\mathfrak a\cap \mathfrak m^i}{\mathfrak a\cap \mathfrak m^{i+1}}\longrightarrow \frac{\mathfrak m^i}{\mathfrak m^{i+1}},\qquad
y+(\mathfrak a\cap \mathfrak m^{i+1})\mapsto y+\mathfrak m^{i+1}.
$$
The image of this map consists of all classes $y+\mathfrak m^{i+1}$ with $y\in \mathfrak a\cap \mathfrak m^i$. Such a class is zero precisely when $y\in \mathfrak a\cap \mathfrak m^{i+1}$. Therefore, the nonzero elements of $\mathrm{im}(\phi_i)$ are represented by those $y\in \mathfrak a$ with $\mathrm{ord}_{\mathfrak m}(y)=i$, in which case $y+\mathfrak m^{i+1}=\mathrm{in}(y)$. It follows that
$$
\mathrm{im}(\phi_i)=\big [\mathrm{in}(\mathfrak a)\big ]_i.
$$

By definition, the source of $\phi_i$ is $(\mathfrak a\cap \mathfrak m^i)/(\mathfrak a\cap \mathfrak m^{i+1})$, which is the degree-$i$ component of $\mathrm{gr}_{\mathfrak m}(\mathfrak a)$. Since $\phi_i$ is injective into $\mathfrak m^i/\mathfrak m^{i+1}$, we obtain
$$
\big[\mathrm{in}(\mathfrak a)\big]_i=\mathrm{im}(\phi_i)=\frac{\mathfrak a\cap \mathfrak m^i}{\mathfrak a\cap \mathfrak m^{i+1}}=\big[\mathrm{gr}_{\mathfrak m}(\mathfrak a)\big]_i.
$$
Therefore, the degree-$i$ components of $\mathrm{in}(\mathfrak a)$ and $\mathrm{gr}_{\mathfrak m}(\mathfrak a)$ coincide for all $i\geq 0$. Consequently,
$$
\mathrm{in}(\mathfrak a)=\mathrm{gr}_{\mathfrak m}(\mathfrak a)\subseteq \mathrm{gr}_{\mathfrak m}(R).
$$
\end{remark}

\section{Behavior of $F$-thresholds with Respect to Fiber of Blow-ups}\label{F-thresholds}
In this section, we prove \autoref{20}, giving a precise comparison between the $F$-thresholds of a local ring $(R,\mathfrak{m})$ and those of its associated graded ring $\mathrm{gr}_{\mathfrak{m}}(R)$. 

 \begin{theorem}\label{20}
 Let $(R, \mathfrak{m}, k)$ be a local $F$-finite ring of prime characteristic $p>0$, and $\mathfrak{b}\subseteq R$ be an $\mathfrak{m}$-primary ideal. Let $A = \mathrm{gr}_{\mathfrak{m}}(R)$ be its associated graded ring with maximal homogeneous ideal $\mathfrak{n}$. Then,
$$c^{\mathfrak{b}}(\mathfrak{m}) \leq c^{I}(\mathfrak{n}),$$
where $I=\mathrm{gr}_\m(\b)$.
\end{theorem}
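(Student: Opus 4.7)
The plan is to establish the sharper inequality $\nu^{\mathfrak{b}}_{\mathfrak{m}}(p^e)\le \nu^{I}_{\mathfrak{n}}(p^e)$ for every $e\ge 0$, from which the theorem follows upon dividing by $p^e$ and letting $e\to\infty$. Fix $e\ge 0$, set $t=\nu^{\mathfrak{b}}_{\mathfrak{m}}(p^e)$, and pick $x\in \mathfrak{m}^t\setminus \mathfrak{b}^{[p^e]}$. The natural candidate for a witness on the graded side is the initial form $\mathrm{in}(x)\in A_r$, where $A=\mathrm{gr}_{\mathfrak{m}}(R)$ and $r=\mathrm{ord}_{\mathfrak{m}}(x)\ge t$. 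Since $A$ is standard graded, $\mathfrak{n}^t=\bigoplus_{i\ge t}A_i$, so $\mathrm{in}(x)\in \mathfrak{n}^t$ automatically; the delicate point is that $\mathrm{in}(x)$ could, a priori, accidentally land in $I^{[p^e]}$ even though $x\notin \mathfrak{b}^{[p^e]}$.

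To control this, I would prove the following lifting lemma: \emph{if $x\in R$ has $r=\mathrm{ord}_{\mathfrak{m}}(x)<\infty$ and $\mathrm{in}(x)\in I^{[p^e]}$, then $x\in \mathfrak{b}^{[p^e]}+\mathfrak{m}^{r+1}$.} The argument proceeds in degree $r$. Since $I=\mathrm{gr}_{\mathfrak{m}}(\mathfrak{b})$ is generated as an ideal of $A$ by the initial forms $\mathrm{in}(b)$ with $b\in\mathfrak{b}$, its Frobenius power $I^{[p^e]}$ is generated by the elements $\mathrm{in}(b)^{p^e}=b^{p^e}+\mathfrak{m}^{s_b p^e+1}$, where $s_b=\mathrm{ord}_{\mathfrak{m}}(b)$. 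Writing a degree-$r$ identity $\mathrm{in}(x)=\sum_i \alpha_i\,\mathrm{in}(b_i)^{p^e}$ with homogeneous $\alpha_i\in A$ and $b_i\in\mathfrak{b}$, I lift each $\alpha_i$ to some $a_i\in R$ of the same $\mathfrak{m}$-adic order. The element $y=\sum_i a_i\,b_i^{p^e}$ then belongs to $\mathfrak{b}^{[p^e]}\cap \mathfrak{m}^r$, and the multiplication rule of $\mathrm{gr}_{\mathfrak{m}}(R)$, that products of lifts descend to products in $A$ modulo one higher power, forces $y\equiv x\pmod{\mathfrak{m}^{r+1}}$, giving the lemma.

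Granted the lemma, I would run a simple induction. If $\mathrm{in}(x)\notin I^{[p^e]}$, then $\mathrm{in}(x)$ already witnesses $\nu^{I}_{\mathfrak{n}}(p^e)\ge t$. Otherwise the lemma furnishes $y\in \mathfrak{b}^{[p^e]}$ such that $x_1:=x-y\in \mathfrak{m}^{r+1}$ still lies in $\mathfrak{m}^t\setminus\mathfrak{b}^{[p^e]}$ and has strictly larger $\mathfrak{m}$-adic order than $x$. Iterating produces a sequence $x=x_0,x_1,x_2,\dots$ of strictly increasing orders, all in $\mathfrak{m}^t\setminus \mathfrak{b}^{[p^e]}$. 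Since $\mathfrak{b}^{[p^e]}$ is $\mathfrak{m}$-primary, some power $\mathfrak{m}^M$ is contained in $\mathfrak{b}^{[p^e]}$, and the iteration cannot pass the stage at which the order reaches $M$, for then $x_k\in \mathfrak{m}^M\subseteq \mathfrak{b}^{[p^e]}$. Hence after finitely many steps one reaches a witness $\mathrm{in}(x_k)\in \mathfrak{n}^{r_k}\subseteq \mathfrak{n}^t$ not belonging to $I^{[p^e]}$, proving the desired bound on $\nu$.

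The main obstacle is the lifting lemma: one has to be careful with the book-keeping of $\mathfrak{m}$-adic orders so that a degree-$r$ expression for $\mathrm{in}(x)$ in $I^{[p^e]}$ really pulls back to an element of $\mathfrak{b}^{[p^e]}$ whose leading form recovers $\mathrm{in}(x)$. The remaining ingredients, termination of the induction via $\mathfrak{m}$-primariness of $\mathfrak{b}^{[p^e]}$, the use of $\mathfrak{n}^t=\bigoplus_{i\ge t}A_i$, and passage to the limit in $e$ to obtain the $F$-threshold inequality, are routine.
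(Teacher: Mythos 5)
Your argument is correct, and it follows the same basic route as the paper: prove the level-by-level inequality $\nu^{\mathfrak{b}}_{\mathfrak{m}}(p^e)\le\nu^{I}_{\mathfrak{n}}(p^e)$ by lifting a graded identity in $I^{[p^e]}$ to an $\mathfrak{m}$-adic approximation in $\mathfrak{b}^{[p^e]}$ and iterating. The one genuine difference is how the iteration is closed off. The paper passes to the completion $\widehat R$ at the outset, then lets the successive remainders run to infinity and sums the resulting Cauchy series, using completeness to conclude $f\in\mathfrak{b}^{[p^e]}$ whenever $\mathfrak{n}^t\subseteq I^{[p^e]}$. You instead start from a witness $x\in\mathfrak{m}^t\setminus\mathfrak{b}^{[p^e]}$, apply your lifting lemma to strictly increase the $\mathfrak{m}$-adic order at each step, and terminate after finitely many steps because $\mathfrak{b}^{[p^e]}$ is $\mathfrak{m}$-primary, so the order cannot exceed the $M$ with $\mathfrak{m}^M\subseteq\mathfrak{b}^{[p^e]}$. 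This is a small but real simplification: it avoids the reduction to the complete case entirely and replaces a convergence argument with a purely finite one, at the cost of having to observe at each step that $x_{k+1}=x_k-y_k$ stays outside $\mathfrak{b}^{[p^e]}$ and inside $\mathfrak{m}^t$ (which you do). Both proofs hinge on the same elementary but essential book-keeping in the lifting lemma, namely that a homogeneous degree-$r$ relation $\mathrm{in}(x)=\sum_i\alpha_i\,\mathrm{in}(b_i)^{p^e}$ pulls back to $x\equiv\sum_i a_i b_i^{p^e}\pmod{\mathfrak{m}^{r+1}}$ with $\sum_i a_i b_i^{p^e}\in\mathfrak{b}^{[p^e]}$, and you handle this correctly.
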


\begin{proof} 
We first assume that $R$ is complete. If $R$ is not complete, the result follows from 
$$c^\mathfrak{b}(\mathfrak{m})=c^{\mathfrak{b}\hat{R}}(\mathfrak{m}\hat{R})\leq c^I(\mathfrak{n}),$$
because $\mathrm{gr}_\mathfrak{m}(R)=\mathrm{gr}_{\mathfrak{m}\hat{R}}(\hat{R})$.  We pick $x_i\in \b$ such that $I = (\mathrm{in}(x_1), \ldots, \mathrm{in}(x_s))$. Then, 
$\mathfrak{b}= (x_1, \ldots, x_s)$. We have that $I^{[p^e]} = (\mathrm{in}(x_1)^{p^e}, \ldots, \mathrm{in}(x_s)^{p^e})$. Suppose $t$  is such that $\mathfrak{n}^t \subseteq {I}^{[p^e]},$ and let $f \in \mathfrak{m}^t$. Since $\mathfrak{n}^t \subseteq {I}^{[p^e]}$, we have
\[
[f] = \sum_{i=1}^s \mathrm{in}(r_{i,0}) \mathrm{in}(x_i)^{p^e} \in \mathfrak{m}^t / \mathfrak{m}^{t+1},
\]
where $ x_i \in \mathfrak{b} \cap \mathfrak{m}^{d_i} $, with $d_i = \mathrm{ord}_{\mathfrak m}(x_i)$. Then, $$ \mathrm{in}(x_i) = x_i + \mathfrak{m}^{d_i + 1} \in (\mathfrak{b} \cap \mathfrak{m}^{d_i}) / (\mathfrak{b} \cap \mathfrak{m}^{d_i + 1}) \subseteq I, $$ and $$ \mathrm{in}(r_{i,0}) = r_{i,0} + \mathfrak{m}^{t - d_i p^e + 1} \in \mathfrak{m}^{t - d_i p^e} / \mathfrak{m}^{t - d_i p^e + 1}.$$ The degree condition ensures that the sum is homogeneous of degree $ t $, because  $(t - d_i p^e) + d_i p^e = t.$ This implies
\[
f = \sum_{i=1}^s r_{i,0} x_i^{p^e} + s_1, \quad s_1 \in \mathfrak{m}^{t+1}.
\]
Since $ [s_1] \in \mathfrak{m}^{t+1} $, its initial form is $$\mathrm{in}(s_1) = s_1 + \mathfrak{m}^{t+2} \in \mathfrak{m}^{t+1} / \mathfrak{m}^{t+2} \subseteq \mathfrak{n}^{t+1}. $$ Then, we have $ \mathfrak{n}^{t+1} \subseteq \mathfrak{n}^t \subseteq I^{[p^e]} $, so
\[
[s_1] = \sum_{i=1}^s \mathrm{in}(r_{i,1}) \mathrm{in}(x_i)^{p^e},
\]
where 
\[ \mathrm{in}(r_{i,1}) = r_{i,1} + \mathfrak{m}^{t + 1 - d_i p^e + 1} \in \mathfrak{m}^{t + 1 - d_i p^e} / \mathfrak{m}^{t + 1 - d_i p^e + 1}, \]
and the degree is $ (t + 1 - d_i p^e) + d_i p^e = t + 1 $. We have
\[
s_1 = \sum_{i=1}^s r_{i,1} x_i^{p^e} + s_2,
\]
with $ [s_2] \in \mathfrak{m}^{t+2} $, and $ \sum_{i=1}^s r_{i,1} x_i^{p^e} \in \mathfrak{b}^{[p^e]} $. Substituting back
\[
f = \sum_{i=1}^s r_{i,0} x_i^{p^e} + \sum_{i=1}^s r_{i,1} x_i^{p^e} + s_2.
\]

We proceed inductively. For $ [s_k] \in \mathfrak{m}^{t+k} $, we have $$s_k + \mathfrak{m}^{t+k+1} \in \mathfrak{m}^{t+k} / \mathfrak{m}^{t+k+1} \subseteq \mathfrak{n}^{t+k} \subseteq \mathfrak{n}^t \subseteq I^{[p^e]},$$ so
\[
[s_k]= \sum_{i=1}^s \mathrm{in}(r_{i,k}) \mathrm{in}(x_i)^{p^e},
\]
with $ \mathrm{in}(r_{i,k}) \in \mathfrak{m}^{t + k - d_i p^e} / \mathfrak{m}^{t + k - d_i p^e + 1} $, and in $ R $
\[
s_k = \sum_{i=1}^s r_{i,k} x_i^{p^e} + s_{k+1}, \quad s_{k+1} \in \mathfrak{m}^{t+k+1}.
\]
Thus, for any $ n$, we have
\[
f = \sum_{k=0}^n \sum_{i=1}^s r_{i,k} x_i^{p^e} + s_{n+1}, \quad s_{n+1} \in \mathfrak{m}^{t+n+1}.
\]
For each $ i $, the sum $ \sum_{k=0}^\infty r_{i,k} $ converges to some  $ r_i \in R $, because  $r_{i,k} \in \mathfrak{m}^{t + k - d_i p^e}$  and $R$ is complete. Thus
\[
f = \sum_{i=1}^s \left( \sum_{k=0}^\infty r_{i,k} \right) x_i^{p^e} = \sum_{i=1}^s r_i x_i^{p^e}.
\]
Since each $ x_i^{p^e} \in \mathfrak{b}^{[p^e]} $, we have $ f \in \mathfrak{b}^{[p^e]} $. As $ f \in \mathfrak{m}^t $ was arbitrary, we conclude $\mathfrak{m}^t \subseteq \mathfrak{b}^{[p^e]}.$ Then, $\min\{t\in\mathbb{N}\mid \mathfrak{m}^t\not\subseteq \mathfrak{b}^{[p^e]}\}\leq \min\{t\in\mathbb{N}\mid \mathfrak{n}^t\not\subseteq I^{[p^e]}\}.$ Therefore, for every $e$,
$$\nu^{\mathfrak{b}}_{\mathfrak{m}}(p^e) \leq \nu^{I}_{\mathfrak{n}}(p^e).$$ Dividing by $p^e$ and then taking limit, we have 
\[
c^{\mathfrak{b}}(\mathfrak{m}) \leq c^{I}(\mathfrak{n}).
\]

\end{proof}

The next corollary follows immediately from the preceding theorem.

 \begin{corollary}
 Let $(R, \mathfrak{m}, k)$ be a local $F$-finite ring of prime characteristic $p>0$, and let $A = \mathrm{gr}_{\mathfrak{m}}(R)$ be its associated graded ring with maximal homogeneous ideal $\mathfrak{n}$. Then,
$$c^{\mathfrak{m}}(\mathfrak{m}) \leq c^{\mathfrak{n}}(\mathfrak{n}).$$
\end{corollary}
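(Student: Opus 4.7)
The plan is to deduce this corollary as a direct specialization of \autoref{20}, applied with the particular choice $\mathfrak{b}=\mathfrak{m}$. Since $R$ is Noetherian local, the maximal ideal $\mathfrak{m}$ is itself $\mathfrak{m}$-primary (being equal to its own radical), so it is a legitimate choice for the ideal $\mathfrak{b}$ in the hypothesis of the theorem. Substituting $\mathfrak{b}=\mathfrak{m}$ into \autoref{20} yields
$$c^{\mathfrak{m}}(\mathfrak{m}) \;\leq\; c^{\mathrm{gr}_\mathfrak{m}(\mathfrak{m})}(\mathfrak{n}),$$
so the only task remaining is to identify the initial ideal $\mathrm{gr}_\mathfrak{m}(\mathfrak{m})$ with $\mathfrak{n}$.

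For this identification I would compute the homogeneous components directly from the definition
$$\mathrm{gr}_\mathfrak{m}(\mathfrak{m}) \;=\; \bigoplus_{i\geq 0} \frac{\mathfrak{m}\cap \mathfrak{m}^i}{\mathfrak{m}\cap \mathfrak{m}^{i+1}}.$$
In degree zero, both the numerator and denominator equal $\mathfrak{m}$, so the component vanishes. In every degree $i\geq 1$, the intersection $\mathfrak{m}\cap \mathfrak{m}^i$ simplifies to $\mathfrak{m}^i$, and the quotient becomes $\mathfrak{m}^i/\mathfrak{m}^{i+1}$, which is precisely the degree-$i$ piece of $\mathfrak{n}$. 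Assembling the pieces gives $\mathrm{gr}_\mathfrak{m}(\mathfrak{m})=\mathfrak{n}$, and inserting this into the displayed inequality produces the desired conclusion $c^{\mathfrak{m}}(\mathfrak{m})\leq c^{\mathfrak{n}}(\mathfrak{n})$.

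There is no meaningful obstacle here: the corollary is essentially a translation of \autoref{20} for the distinguished ideal $\mathfrak{b}=\mathfrak{m}$, and all the substantive work — the lifting argument from $\mathrm{gr}_\mathfrak{m}(R)$ back to the complete local ring $R$ — was already carried out in the proof of \autoref{20}. The only point one must be slightly careful about is the degree-zero component of $\mathrm{gr}_\mathfrak{m}(\mathfrak{m})$, since $\mathfrak{m}^0=R$ differs from $\mathfrak{m}$; but this simply confirms that $\mathrm{gr}_\mathfrak{m}(\mathfrak{m})$ is the homogeneous maximal ideal rather than all of $A$.
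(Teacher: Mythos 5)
Your proof is correct and is exactly what the paper intends (the paper simply states that the corollary "follows immediately" from the theorem without spelling it out). Applying \autoref{20} with $\mathfrak{b}=\mathfrak{m}$ and observing that $\mathrm{gr}_{\mathfrak{m}}(\mathfrak{m})=\bigoplus_{i\ge 1}\mathfrak{m}^i/\mathfrak{m}^{i+1}=\mathfrak{n}$ is precisely the intended argument.
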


The inequality in the preceding lemma need not be an equality, as the following example shows.

\begin{example}
Let $k$ be a field of characteristic $p>0$ and set
\[
R=\frac{k[[x,y,z,w]]}{(xy-z^{2}w)},\qquad \mathfrak m=(x,y,z,w).
\]
Since $\ord_{\mathfrak m}(xy)=2<\ord_{\mathfrak m}(z^{2}w)=3$, 
\[
\mathrm{gr}_{\mathfrak m}(R)\cong \frac{k[x,y,z,w]}{(xy)},
\]
whose homogeneous maximal ideal we denote by $\mathfrak n$.
We have $c^{\mathfrak m}(\mathfrak m)=\tfrac{5}{2}$ \cite[Theorem 2]{MOY}. 
Moreover, $\mathrm{gr}_{\mathfrak m}(R)$ is a $3$-dimensional standard graded ring, so
$c^{\mathfrak n}(\mathfrak n)=3$. Consequently,
\[
c^{\mathfrak m}(\mathfrak m)=\frac{5}{2}\;<\;3=c^{\mathfrak n}(\mathfrak n).
\]
\end{example}

When the ring $(R,\mathfrak{m},k)$ is regular—whether we are working in the local case or in the standard graded case—the relationship between $F$-pure thresholds and $F$-thresholds is as nice as one could hope: for every ideal $\mathfrak{a} \subseteq R$ one has an exact equality $
\mathrm{fpt}(\mathfrak{a}) = c^{\mathfrak{m}}(\mathfrak{a}).$

This correspondence, however, begins to break down as soon as singularities appear. In fact, in general one can only guarantee the weaker inequality $
\mathrm{fpt}(\mathfrak{a}) \leq c^{\mathfrak{m}}(\mathfrak{a}),$ so equality becomes a very special phenomenon.
This has been investigated from different angles. In the graded setting, De Stefani, Núñez-Betancourt, and Pérez \cite{DSNBP} showed that if one starts with a standard graded Gorenstein $k$-algebra that is $F$-finite and $F$-pure, then the condition $\mathrm{fpt}(\mathfrak{m}) = c^{\mathfrak{m}}(\mathfrak{m})
$ already forces the ring to be strongly $F$-regular. In other words, within this class of rings, achieving equality means that the ring has the strongest and most desirable type of $F$-singularity.

De Stefani, Núñez-Betancourt, and Smirnov \cite{DNS} showed that if $(R,\mathfrak{m})$ is an $F$-finite local ring whose associated graded ring $\mathrm{gr}_{\mathfrak{m}}(R)$ is Gorenstein, then the behavior of $F$-singularities in the graded ring reflects back to the local ring itself. More concretely, if the associated graded ring is $F$-pure, then one has an equality of thresholds, $\mathrm{fpt}(R) = \mathrm{fpt}(\mathrm{gr}_{\mathfrak{m}}(R))$, which in particular shows that $R$ is $F$-pure. Even more, if the associated graded ring is strongly $F$-regular, then the local ring automatically inherits strong $F$-regularity.

\begin{corollary}
Let $(R, \mathfrak{m}, k)$ be an $F$-finite local ring of characteristic $p > 0$. Let $A = \mathrm{gr}_{\mathfrak{m}}(R)$ be the associated graded ring with maximal homogeneous ideal $\mathfrak{n}$. Suppose that $A$ is Gorenstein. If $A$ is $F$-pure and $\mathrm{fpt}(A) = c^{\mathfrak{n}}(\mathfrak{n})$, then $\mathrm{fpt}(R) = c^{\mathfrak{m}}(\mathfrak{m})$ and $R$ is strongly $F$-regular.

\end{corollary}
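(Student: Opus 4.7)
The plan is to combine the main comparison $c^{\mathfrak{m}}(\mathfrak{m}) \leq c^{\mathfrak{n}}(\mathfrak{n})$ established above with two descent-type inputs from the literature: the theorem of De~Stefani--Núñez-Betancourt--Pérez (DSNBP) for standard graded Gorenstein $F$-finite $F$-pure $k$-algebras, which asserts that the equality $\mathrm{fpt}(\mathfrak{n}) = c^{\mathfrak{n}}(\mathfrak{n})$ forces strong $F$-regularity; and the theorem of De~Stefani--Núñez-Betancourt--Smirnov (DNS), which when $\mathrm{gr}_{\mathfrak{m}}(R)$ is Gorenstein transports both $F$-purity (together with the threshold equality $\mathrm{fpt}(R) = \mathrm{fpt}(A)$) and strong $F$-regularity from $A$ back to $R$.

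First I would note that $A = \mathrm{gr}_{\mathfrak{m}}(R)$ is a standard graded $k$-algebra and, since $F$-finiteness of $R$ is inherited by its associated graded ring, $A$ is $F$-finite. The hypotheses that $A$ is Gorenstein, $F$-pure, and satisfies $\mathrm{fpt}(A) = c^{\mathfrak{n}}(\mathfrak{n})$ then allow me to invoke DSNBP and conclude that $A$ is strongly $F$-regular. Applying DNS, since $A$ is strongly $F$-regular and Gorenstein, strong $F$-regularity descends to $R$, which delivers the second half of the conclusion.

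For the threshold equality, the $F$-pure half of DNS supplies $\mathrm{fpt}(R) = \mathrm{fpt}(A)$. Combined with the universal inequality $\mathrm{fpt}(\mathfrak{m}) \leq c^{\mathfrak{m}}(\mathfrak{m})$ and the comparison $c^{\mathfrak{m}}(\mathfrak{m}) \leq c^{\mathfrak{n}}(\mathfrak{n})$ coming from \autoref{20}, I assemble the chain
\[
\mathrm{fpt}(R) \;\leq\; c^{\mathfrak{m}}(\mathfrak{m}) \;\leq\; c^{\mathfrak{n}}(\mathfrak{n}) \;=\; \mathrm{fpt}(A) \;=\; \mathrm{fpt}(R),
\]
in which every inequality is forced to be an equality; in particular $\mathrm{fpt}(R) = c^{\mathfrak{m}}(\mathfrak{m})$. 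I anticipate no serious obstacle in this plan: the only technicality is the routine verification that $F$-finiteness of $R$ descends to $A$, so that DSNBP is genuinely applicable in the graded setting, after which the argument is a pure bookkeeping of the two external results pinched together by \autoref{20}.
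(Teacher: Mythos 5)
Your proposal is correct and takes essentially the same route as the paper: the paper's one-line chain $\mathrm{fpt}(R) \le c^{\mathfrak m}(\mathfrak m) \le c^{\mathfrak n}(\mathfrak n) = \mathrm{fpt}(A) = \mathrm{fpt}(R)$ relies implicitly on the DNS identity $\mathrm{fpt}(R)=\mathrm{fpt}(A)$ and on \autoref{20}, exactly as you spell out, and then invokes DSNBP for strong $F$-regularity of $A$ followed by DNS to descend it to $R$. You merely make explicit the routine checks (e.g.\ that $A$ is $F$-finite as a finitely generated algebra over the $F$-finite residue field) that the paper leaves tacit.
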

\begin{proof} We have that
\[
\mathrm{fpt}(R) \leq c^{\mathfrak{m}}(\mathfrak{m}) \leq c^{\mathfrak{n}}(\mathfrak{n})=\mathrm{fpt}(A)=\mathrm{fpt}(R).
\] Hence, $\mathrm{fpt}(R) = c^{\mathfrak{m}}(\mathfrak{m})$. Since $A$ is strongly $F$-regular \cite[Theorem B]{DSNBP}, so is $R$ \cite[Theorem 5.8]{DNS}.

\end{proof}
\section{Behavior of $F$-rationality with Respect to Fiber of Blow-ups}\label{F-rationality}
The concept of $F$-rationality plays a central role in the study of $F$-singularities, as it characterizes mild singular behavior through the tight closure of parameter ideals.  Our main result in this section shows that if the associated graded ring $\mathrm{gr}_{\mathfrak{m}}(R)$ is $F$-rational, then the local ring $R$ is $F$-rational as well. The result follows from inequality in \autoref{20}, which transfers the key bound from the associated graded ring to the original ring; together with the characterization of $F$-thresholds  in \autoref{7}, this yields the implication from $\mathrm{gr}_{\m}(R)$ to $R$.

In preparation for the next proposition, we review the notion of superficial elements, which are frequently used in dimension theory and in the analysis of associated graded rings.

\begin{definition}

 An element $ a \in \mathfrak{m} \setminus \mathfrak{m}^2 $ is superficial for maximal ideal $ \mathfrak{m} $ if there exists a non-negative integer $ c $ such that
  \[
  (\mathfrak{m}^{n+1} : a) \cap \mathfrak{m}^c = \mathfrak{m}^n 
  \]
 for all $n \geq c.$
 \end{definition}

This condition ensures that multiplication by $a$ preserves the structure of the powers of $\mathfrak{m}$ in a controlled way. Such elements are particularly useful in reducing the dimension of a ring modulo $a$, a feature that becomes evident in the following proposition:

\begin{proposition}[{\cite[p.296]{ZS}}]\label{13}
Let $(R,\mathfrak{m},k)$ be a $d$-dimensional local ring and $a$ be a superficial element in $\mathfrak{m}$. Let $A=gr_\mathfrak{m}(R)$ be its associated graded ring. Then, \begin{enumerate}
\item  $ \mathrm{in}(a) \notin \bigcup_{i=1}^{m} Q_i $, where $ Q_i \in \mathrm{Ass}(A) $, $ Q_i \neq \bigoplus_{i>0} \mathfrak{m}^i / \mathfrak{m}^{i+1} $.
\item $ \dim R / aR = d - 1 $.

  \item $a$ is $ R $-regular if and only if $ \mathrm{depth} \,R > 0 $.
 \item For $ j \geq 1 $, $ \mathrm{depth} \,R / aR \geq j $ if and only if $ \mathrm{depth} \, R \geq j + 1 $.
  \item $\mathrm{in}(a)$ is $ A $-regular if and only if $ \mathrm{depth} \,A > 0.$

\end{enumerate}
\end{proposition}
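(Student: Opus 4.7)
The plan is to treat (1) as the backbone and derive the other four assertions from it together with the superficial identity $(\mathfrak m^{n+1}:a)\cap \mathfrak m^c=\mathfrak m^n$ for $n\ge c$. The repeated mechanism I want to exploit is: if $\bar y\in A$ is a homogeneous element of degree $n\ge c$ with $\mathrm{in}(a)\bar y=0$, then lifting $\bar y$ to $y\in \mathfrak m^n\setminus \mathfrak m^{n+1}$ gives $ay\in \mathfrak m^{n+2}$ with $y\in \mathfrak m^c$, whence the superficial identity forces $y\in \mathfrak m^{n+1}$, a contradiction.

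For (1), fix a homogeneous associated prime $Q\in\mathrm{Ass}(A)$ different from the irrelevant ideal $\mathfrak n$ and write $Q=\mathrm{Ann}_A(\bar x)$ with $\bar x$ homogeneous. Because $Q\subsetneq \mathfrak n$, there exists a homogeneous $z$ of positive degree with $z\notin Q$; by primeness of $Q$, each $z^k\bar x$ is nonzero and has annihilator exactly $Q$. So after replacing $\bar x$ by a suitable $z^k\bar x$, I may assume $\deg\bar x\ge c$. If $\mathrm{in}(a)\in Q$ then $\mathrm{in}(a)\bar x=0$, and the mechanism above gives the contradiction.

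Part (5) is now immediate: if $\mathrm{depth}\,A>0$ then $\mathfrak n\notin\mathrm{Ass}(A)$, so by (1) the element $\mathrm{in}(a)$ lies in none of the associated primes of $A$ and is therefore $A$-regular; the converse is trivial. For (2), Krull's Hauptidealsatz gives $\dim R/aR\ge d-1$; using $\dim A=\dim R$ and applying (1) to a minimal prime of $A$ of maximal coheight (which is associated and, when $d\ge 1$, not equal to $\mathfrak n$) shows $\dim A/(\mathrm{in}(a))=d-1$, and the natural surjection $A/(\mathrm{in}(a))\twoheadrightarrow \mathrm{gr}_{\mathfrak m/aR}(R/aR)$ combined with $\dim R/aR=\dim\mathrm{gr}_{\mathfrak m/aR}(R/aR)$ yields the reverse bound. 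For (3), the nontrivial direction starts with $ax=0$, $x\ne 0$ and a regular element $b\in\mathfrak m$ (guaranteed by $\mathrm{depth}\,R>0$); then $b^kx\ne 0$ and $a(b^kx)=0$, and choosing $k$ large so that $b^kx\in\mathfrak m^c$ lets me feed $b^kx$ into the same superficial-identity engine, concluding $b^kx\in\bigcap_N\mathfrak m^N=0$ by Krull intersection, a contradiction. For (4), with (3) in hand I use the short exact sequence $0\to R\xrightarrow{a}R\to R/aR\to 0$ and the long exact sequence for $\mathrm{Ext}_R^\bullet(k,-)$ to obtain $\mathrm{depth}(R/aR)=\mathrm{depth}(R)-1$ whenever $\mathrm{depth}\,R\ge 1$, and handle the edge case $\mathrm{depth}\,R=0$ by showing (via the same $b^kx$ trick) that $\mathfrak m\in\mathrm{Ass}(R)$ would force $\mathfrak m\in\mathrm{Ass}(R/aR)$, contradicting $\mathrm{depth}(R/aR)\ge 1$.

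The main obstacle is the degree-bumping step in (1): one has to justify that a non-irrelevant homogeneous associated prime can be realized as the annihilator of a homogeneous element of \emph{arbitrarily large} degree, since only then can the superficial identity be invoked. The primality of $Q$ plus the existence of a homogeneous $z\notin Q$ of positive degree is exactly what makes this possible, and once this is secured, parts (2)--(5) are routine consequences of (1), (3), and standard depth/dimension bookkeeping.
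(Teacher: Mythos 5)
The paper does not prove this proposition; it is simply cited to Zariski--Samuel \cite[p.296]{ZS}, so there is no internal argument to compare against (the closest the paper comes is \autoref{colon}, which is your degree-shifting mechanism packaged in the special case where $\mathrm{in}(a)$ is already assumed to be a nonzerodivisor on $A$). Your overall strategy---make (1) the engine via the superficial identity $(\mathfrak m^{n+1}:a)\cap\mathfrak m^c=\mathfrak m^n$, then derive (5), (2), (3), and the generic part of (4)---is sound and is in fact the standard route.

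Parts (1), (5), (3) are correct as written. In (1) the degree-bumping step is exactly right: $Q\neq\mathfrak n$ forces $Q\subsetneq\mathfrak n$ in a standard graded ring with $A_0=k$, so a homogeneous $z\notin Q$ of positive degree exists, and $\mathrm{Ann}_A(z^k\bar x)=Q$ by primality, which makes the witness available in arbitrarily high degree. In (3), iterating the superficial identity forces a nonzero $y\in\mathfrak m^c$ with $ay=0$ into $\bigcap_N\mathfrak m^N=0$, contradiction. Part (2) is also fine, provided one notes that (1) gives $\mathrm{in}(a)$ outside every minimal prime of $A$ when $d\geq 1$ (so that Krull in each $A/P$ yields $\dim A/(\mathrm{in}(a))\leq d-1$), and that the surjection $A/(\mathrm{in}(a))\twoheadrightarrow\mathrm{gr}_{\mathfrak m/aR}(R/aR)$ then bounds $\dim R/aR$ from above.

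The one place your sketch genuinely under-delivers is the edge case of (4), namely $\mathrm{depth}\,R=0$. You claim the $b^kx$ trick shows that $\mathfrak m\in\mathrm{Ass}(R)$ forces $\mathfrak m\in\mathrm{Ass}(R/aR)$. That implication is false as stated: a socle element $w$ of $R$ (so $\mathfrak m w=0$, $w\neq0$) may well lie in $aR$, in which case $\bar w=0$ in $R/aR$ and carries no information. The argument has to be run indirectly and iteratively. Assume toward contradiction that $\mathrm{depth}(R/aR)\geq1$ and choose $b\in\mathfrak m$ with $\bar b$ regular on $R/aR$. If $w\notin aR$ we are done, so write $w=av_1$. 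Then $a(b^kv_1)=b^kw=0$ and $b^kv_1\in\mathfrak m^c$ for $k\geq c$, so the superficial identity kills $b^kv_1$; regularity of $\bar b$ then forces $v_1\in aR$. Iterating gives $w\in a^nR\subseteq\mathfrak m^n$ for every $n$, hence $w=0$ by Krull's intersection theorem---a contradiction. This salvages (4), but it is a direct contradiction, not the assertion $\mathfrak m\in\mathrm{Ass}(R/aR)$, and the iteration (rather than a one-step descent) is essential.
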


The following lemma, commonly known as Sally’s machine, offers a method for transferring depth conditions from the quotient ring $R/(a)$ to $R$. It is a standard tool in inductive arguments concerning depth.

\begin{lemma}[{Sally’s machine \cite[Lemma 2.2]{HM}}] Let $(R,\mathfrak{m},k)$ be a $d$-dimensional local ring and $a$ be a superficial element in $\mathfrak{m}$. For $ j \geq 1 $, $ \mathrm{depth} \,\mathrm{gr}_{\mathfrak{m}/( a)}(R/(a)) \geq j $ if and only if $ \mathrm{depth}\, \mathrm{gr}_{\mathfrak{m}}(R) \geq j + 1 $.
\end{lemma}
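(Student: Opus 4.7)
The plan is to reduce to the standard identity $\depth M/(f) = \depth M - 1$ for a nonzerodivisor $f$, applied to $M = \mathrm{gr}_{\mathfrak{m}}(R)$ and $f = \mathrm{in}(a)$. The crucial algebraic input is that whenever $\mathrm{in}(a)$ is a nonzerodivisor on $\mathrm{gr}_{\mathfrak{m}}(R)$, there is a canonical isomorphism
\[
\mathrm{gr}_{\mathfrak{m}}(R)/\bigl(\mathrm{in}(a)\bigr) \;\cong\; \mathrm{gr}_{\mathfrak{m}/(a)}\bigl(R/(a)\bigr).
\]
A natural degree-preserving surjection always exists, since $\mathrm{in}(a)$ clearly maps to zero on the right. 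For injectivity I would verify the Valabrega--Valla condition $(a)\cap \mathfrak{m}^{n+1} = a\mathfrak{m}^n$ for every $n\ge 0$ by a short induction on $n$: if $ab\in \mathfrak{m}^{n+1}$ while $b\notin \mathfrak{m}^n$, then $\mathrm{in}(a)\cdot \mathrm{in}(b) = 0$ in $\mathrm{gr}_{\mathfrak{m}}(R)$, contradicting regularity of $\mathrm{in}(a)$.

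With this isomorphism in hand, the $(\Leftarrow)$ direction is immediate. If $\depth \mathrm{gr}_{\mathfrak{m}}(R)\ge j+1 \ge 2$, then \autoref{13}(e) makes $\mathrm{in}(a)$ a nonzerodivisor on $\mathrm{gr}_{\mathfrak{m}}(R)$, so the isomorphism applies and quotienting by a regular element drops depth by exactly one, yielding $\depth \mathrm{gr}_{\mathfrak{m}/(a)}(R/(a))\ge j$.

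The subtle $(\Rightarrow)$ direction, which I expect to be the main obstacle, requires first establishing $\depth \mathrm{gr}_{\mathfrak{m}}(R) \ge 1$ from $\depth \mathrm{gr}_{\mathfrak{m}/(a)}(R/(a))\ge 1$, before the isomorphism can be applied. I would argue by contrapositive: if the homogeneous maximal ideal $\mathfrak{n}$ of $\mathrm{gr}_{\mathfrak{m}}(R)$ is an associated prime, there is $x\in \mathfrak{m}^n\setminus \mathfrak{m}^{n+1}$ with $x\mathfrak{m}\subseteq \mathfrak{m}^{n+2}$. Using the precise superficiality equality $(\mathfrak{m}^{n+1}:a)\cap \mathfrak{m}^c = \mathfrak{m}^n$ for $n\gg 0$, one can replace $x$ by $x - ar$ for a suitable $r$ so that its image in $R/(a)$ still lies outside $\overline{\mathfrak{m}}^{n+1}$. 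That image is then a nonzero class in $\overline{\mathfrak{m}}^n/\overline{\mathfrak{m}}^{n+1}$ annihilated by $\overline{\mathfrak{m}}$, contradicting $\depth \mathrm{gr}_{\mathfrak{m}/(a)}(R/(a))\ge 1$. Having secured regularity of $\mathrm{in}(a)$, the isomorphism together with the depth-drop identity then upgrades $\depth \mathrm{gr}_{\mathfrak{m}/(a)}(R/(a))\ge j$ to $\depth \mathrm{gr}_{\mathfrak{m}}(R)\ge j+1$. The delicate perturbation step is precisely where the full strength of superficiality -- beyond merely $a\notin \mathfrak{m}^2$ -- is needed.
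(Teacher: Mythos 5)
The paper does not supply a proof of this lemma; it is quoted from Huckaba--Marley (the reference labelled [HM, Lemma~2.2]), so there is no in-paper argument against which to measure your proposal.

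Your Valabrega--Valla verification and the $(\Leftarrow)$ direction are fine: once $\mathrm{depth}\,\mathrm{gr}_{\mathfrak{m}}(R)\ge 2$, part (5) of \autoref{13} makes $\mathrm{in}(a)$ a nonzerodivisor, the quotient $\mathrm{gr}_{\mathfrak{m}}(R)/(\mathrm{in}(a))$ identifies with $\mathrm{gr}_{\mathfrak{m}/(a)}(R/(a))$, and depth drops by exactly one.

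The sketch of the $(\Rightarrow)$ direction has a genuine gap. Given a homogeneous socle class $\mathrm{in}(x)$ with $x\in\mathfrak{m}^{n}\setminus\mathfrak{m}^{n+1}$, you propose replacing $x$ by $x-ar$ so that its image in $R/(a)$ lies outside $\overline{\mathfrak{m}}^{\,n+1}$. But $x$ and $x-ar$ have the \emph{same} image in $R/(a)$, so this perturbation cannot remedy the problematic case $x\in\mathfrak{m}^{n+1}+(a)$, which is exactly when the socle class of $\mathrm{gr}_{\mathfrak{m}}(R)$ dies under the natural surjection onto $\mathrm{gr}_{\mathfrak{m}/(a)}(R/(a))$. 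In addition, the degree $n$ of the socle class is fixed by $H^{0}_{\mathfrak{n}}(\mathrm{gr}_{\mathfrak{m}}(R))$ and may well be smaller than the constant $c$ in the superficiality relation, so $(\mathfrak{m}^{n+1}:a)\cap\mathfrak{m}^{c}=\mathfrak{m}^{n}$ need not be available in the degree you actually need it; writing ``for $n\gg 0$'' does not help, since you do not get to choose $n$. Known proofs of Sally's machine circumvent both difficulties via local cohomology: setting $A=\mathrm{gr}_{\mathfrak{m}}(R)$ and $f=\mathrm{in}(a)$, one compares
\[
0 \longrightarrow (0:_A f) \longrightarrow A(-1) \longrightarrow A \longrightarrow A/fA \longrightarrow 0
\qquad\text{and}\qquad
0 \longrightarrow K \longrightarrow A/fA \longrightarrow \mathrm{gr}_{\mathfrak{m}/(a)}(R/(a)) \longrightarrow 0,
\]
using that superficiality forces both $(0:_A f)$ and $K$ to have finite length. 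Finite-length graded modules contribute only to $H^{0}_{\mathfrak{n}}$, so the low-degree behaviour that your perturbation cannot control is absorbed there, and the comparison of $H^{i}_{\mathfrak{n}}$ in degrees $i\ge 1$ yields the depth statement. Without some version of this local-cohomological bookkeeping, the $(\Rightarrow)$ direction as you have written it does not close.
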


It is expected that if $\mathrm{gr}_{\mathfrak{m}}(R)$ satisfies a good property, then $R$ also satisfies the property. 
 The following lemma make this relationship for Cohen--Macaulayess. We point out that this known to the experts, and we add it for the sake of completeness.

\begin{lemma}\label{2}
Let $ (R, \mathfrak{m}, k)$ be a $d$-dimensional Noetherian local ring. Let  $A = \mathrm{gr}_{\mathfrak{m}}(R)$ be the associated graded ring with maximal homogeneous ideal $\mathfrak{n}$. Assume $A$ is Cohen--Macaulay. Then, $R$ is Cohen--Macaulay.
\end{lemma}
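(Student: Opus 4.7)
The plan is to proceed by induction on $d = \dim R$. The case $d = 0$ is immediate, since any Artinian local ring is trivially Cohen--Macaulay.

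For the inductive step, assume $d \geq 1$ and pick a superficial element $a \in \mathfrak{m} \setminus \mathfrak{m}^2$ for $\mathfrak{m}$. Since $A$ is Cohen--Macaulay of dimension $d > 0$, we have $\mathrm{depth}(A) = d > 0$, so \autoref{13}(e) gives that $\mathrm{in}(a)$ is a nonzerodivisor on $A$. A short initial-form computation then shows that $a$ is a nonzerodivisor on $R$: if $a r = 0$ with $r \neq 0$ and $\mathrm{ord}_{\mathfrak{m}}(r) = k$, then in the degree $k+1$ component we have $\mathrm{in}(a)\,\mathrm{in}(r) = [ar] = 0$ in $A$, contradicting the regularity of $\mathrm{in}(a)$ (since $\mathrm{in}(r) \neq 0$).

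With $a$ regular on $R$, \autoref{13}(b) gives $\dim R/(a) = d - 1$, and Sally's machine applied to $\mathrm{depth}(A) \geq d$ yields
$$
\mathrm{depth}\bigl(\mathrm{gr}_{\mathfrak{m}/(a)}(R/(a))\bigr) \;\geq\; d - 1 \;=\; \dim\bigl(\mathrm{gr}_{\mathfrak{m}/(a)}(R/(a))\bigr),
$$
so the associated graded ring of $R/(a)$ is Cohen--Macaulay. By the inductive hypothesis applied to $R/(a)$, the quotient $R/(a)$ is Cohen--Macaulay, and hence $\mathrm{depth}(R) = \mathrm{depth}(R/(a)) + 1 = d$, showing that $R$ is Cohen--Macaulay. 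The boundary case $d = 1$ (where Sally's machine requires $j \geq 1$) is handled directly, since $R/(a)$ is Artinian and thus trivially Cohen--Macaulay.

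The principal technical obstacle is the existence of a superficial element $a$, which is not automatic when the residue field $k$ is finite. The standard cure is to replace $R$ by the faithfully flat local extension $R[t]_{\mathfrak{m}R[t]}$, which has infinite residue field, preserves Cohen--Macaulayness in both directions, and whose associated graded ring is $A \otimes_k k(t)$; this preserves the Cohen--Macaulay hypothesis on the associated graded ring. Beyond this routine reduction, all the ingredients, namely \autoref{13} and Sally's machine, are supplied by the preceding material.
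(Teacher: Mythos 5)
Your proof is correct and follows essentially the same inductive scheme as the paper's, including the reduction to an infinite residue field, the choice of an element of order one whose initial form is a nonzerodivisor, and the descent through $R/(a)$ via \autoref{13}. The only variation is that you obtain the Cohen--Macaulayness of $\mathrm{gr}_{\mathfrak{m}/(a)}(R/(a))$ from Sally's machine, whereas the paper directly uses the graded isomorphism $A/(\mathrm{in}(x_1)) \cong \mathrm{gr}_{\mathfrak{m}/(x_1)}(R/(x_1))$; since you have already shown $\mathrm{in}(a)$ is a nonzerodivisor, the latter route is available as well, but both are sound.
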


\begin{proof}

We first reduce to the case where the residue field $k = R/\mathfrak{m}$ is infinite. If $k$ is already infinite, we proceed directly. Otherwise, we replace $R$ by the local ring
$$
R'= R[X]_{\mathfrak{m} R[X]},
$$
where $X$ is an indeterminate. The extension $R \to R'$ is faithfully flat and local, with maximal ideal $\mathfrak{m}R'$. The residue field of $R'$ is $k(X)$, which is infinite. 

Moreover, the associated graded ring satisfies
$$
\mathrm{gr}_{\mathfrak{m} R'}(R') \cong \mathrm{gr}_{\mathfrak{m}}(R) \otimes_k k(X) = A \otimes_k k(X),
$$
which is Cohen--Macaulay as $A$ is Cohen--Macaulay and base change by field extensions preserves the Cohen--Macaulay property. Therefore, it suffices to prove the result for $R'$, and then descend the conclusion to $R$ via faithful flatness. Thus, we may assume without loss of generality that the residue field $k$ is infinite.

Let $J=(\mathrm{in}(x_1),\ldots, \mathrm{in}(x_d))$ be an ideal of $A$ generated by a full system of parameters. We proceed by induction on $d = \dim R$. If $d = 0$,  both  $A = \mathrm{gr}_{\mathfrak{m}}(R)$  and $R$ are Cohen--Macaulay. 

Assume now that $d \geq 1$, and suppose the result holds in dimension $d-1$. Since $A = \mathrm{gr}_{\mathfrak{m}}(R)$ is Cohen--Macaulay of dimension $d$, and $k$ is infinite, we can choose an element $x_1 \in \mathfrak{m} \setminus \mathfrak{m}^2$ such that its initial form $\mathrm{in}(x_1) \in \mathfrak{m}/\mathfrak{m}^2 \subseteq A$ is a nonzerodivisor on $A$. Then $\mathrm{in}(x_1)$ is $A$-regular, and since $A$ is Cohen--Macaulay, We have
$A / (\mathrm{in}(x_1)) \cong \mathrm{gr}_{\mathfrak{m}/(x_1)}(R/(x_1))$
\cite[Exercise 5.3]{Eisenbud},
and the quotient ring is Cohen--Macaulay of dimension $d-1$. By the induction hypothesis, $R/(x_1)$ is Cohen--Macaulay.  By \autoref{13}(4), $\mathrm{depth}(R) = d$. Therefore, $R$ is Cohen--Macaulay. 

\end{proof}

The next lemma establishes a basic connection between the associated graded ring and the local ring. It shows that if the initial form of an element is a nonzerodivisor on the graded ring, then the element itself is a nonzerodivisor and the expected colon relation holds.

\begin{lemma}\label{colon}
Let $(R,\m, k)$ be a Noetherian local ring and let $A=\mathrm{gr}_\m(R)$. Let $x\in \m\setminus \m^2$ and suppose that $\mathrm{in}(x)\in A_1$ is a nonzerodivisor on $A$. 
Then, 
\[
(\m^{\,n+1} :_R x) = \m^n \quad \forall\, n\ge 0.
\]
Moreover, $x$ is a nonzerodivisor on $R$.
\end{lemma}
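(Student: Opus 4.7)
The plan is to prove the two assertions in order, with the colon identity doing all the work for the regularity of $x$. The inclusion $\m^{n}\subseteq (\m^{n+1}:_R x)$ is immediate from $x\in \m$, so only the reverse inclusion requires argument.

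For the reverse inclusion, I would argue by contradiction using initial forms in the associated graded ring. Suppose $y\in R$ satisfies $xy\in \m^{n+1}$ but $y\notin \m^n$. By the Krull intersection theorem $y\ne 0$, so $r:=\ord_\m(y)$ is a finite integer with $r<n$, and $\mathrm{in}(y)\in A_r$ is nonzero. Since $\mathrm{in}(x)\in A_1$, the product $\mathrm{in}(x)\cdot \mathrm{in}(y)\in A_{r+1}$ is represented by the class $xy+\m^{r+2}$. Because $r+2\le n+1$, the hypothesis $xy\in \m^{n+1}$ forces this product to vanish in $A$. The assumption that $\mathrm{in}(x)$ is a nonzerodivisor on $A$ then gives $\mathrm{in}(y)=0$, a contradiction.

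The second assertion is then an immediate consequence of the first: if $xy=0$, then $xy\in \m^{n+1}$ for every $n\ge 0$, so the colon identity yields $y\in \m^n$ for all $n$, hence $y\in \bigcap_{n\ge 0}\m^n$, and the Krull intersection theorem forces $y=0$.

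There is no substantial obstacle here; the only thing that requires care is the bookkeeping of graded degrees, namely verifying that when $y\notin \m^n$ one has $r+2\le n+1$, so that multiplying the initial forms of $x$ and $y$ lands precisely in the graded piece where $xy$ is already known to vanish modulo $\m^{r+2}$.
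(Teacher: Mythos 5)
Your proof is correct and follows essentially the same route as the paper's: both arguments take an element $y$ with $xy\in\m^{n+1}$ but $y\notin\m^n$, set $r=\ord_\m(y)<n$, observe that the product $\mathrm{in}(x)\cdot\mathrm{in}(y)\in A_{r+1}$ is represented by $xy+\m^{r+2}$, note that $\m^{n+1}\subseteq\m^{r+2}$ forces this product to vanish, and then invoke the nonzerodivisor hypothesis on $\mathrm{in}(x)$ to contradict $\mathrm{in}(y)\ne 0$. Your phrasing of the key graded step is actually a bit more careful than the paper's (which writes $\mathrm{in}(xf)=\mathrm{in}(x)\cdot\mathrm{in}(f)$ without qualification, an identity that only holds when the right side is nonzero; you correctly describe the product as represented by the class $xy+\m^{r+2}$). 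The only real difference is cosmetic: for the final assertion that $x$ is a nonzerodivisor, the paper cites a reference, whereas you derive it directly from the colon identity together with the Krull intersection theorem — which is exactly the content of the cited lemma, spelled out. One tiny stylistic point: invoking Krull intersection to conclude $y\ne 0$ is unnecessary, since $y\notin\m^n$ already gives $y\ne 0$ outright; but this does no harm.
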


\begin{proof}
The inclusion $\m^n\subseteq(\m^{n+1}:x)$ follows from  $x\in\m$. For the reverse, let $f \in (\m^{n+1}:x)$. That means $xf \in \m^{n+1}$.
Suppose for contradiction that $f \notin \m^n$. Let $t = \mathrm{ord}_\m(f)$, then by assumption $t < n-1$. Since $f \in \m^t \setminus \m^{t+1}$, its initial form is
$$
\mathrm{in}(f) = f + \m^{t+1} \;\in\; \m^t/\m^{t+1}.
$$
Because $x \in \m\setminus\m^2$, $\mathrm{in}(x)$ has degree $1$. Therefore
$$
\mathrm{in}(xf) = \mathrm{in}(x)\cdot \mathrm{in}(f) \;\in\; \m^{t+1}/\m^{t+2}.
$$
We also know $xf \in \m^{n+1}$. Since $t < n-1$, this forces $n \ge t+1$. Thus $xf \in \m^{n+1} \subseteq \m^{t+2}.$ Hence in degree $t+1$ we have
$$
  \mathrm{in}(xf) = xf + \m^{t+2} = 0 + \m^{t+2}.
  $$
Therefore, $$
\mathrm{in}(x)\cdot \mathrm{in}(f) = \mathrm{in}(xf) = 0 \quad \text{in } \m^{t+1}/\m^{t+2}.
$$

Since $\mathrm{in}(x)$ is a nonzerodivisor in $A$, this forces $\mathrm{in}(f)=0$, i.e. $f\in \m^{t+1}$. Contradicting the choice of $t$. Thus no such $f$ exists, and we conclude $$
(\m^{n+1}:x)\subseteq \m^n.
$$
As a consequence, $x$ is a regular element in $R$ \cite[Lemma 8.5.3]{HS}.

\end{proof}

Building on \autoref{colon}, we now examine how systems of parameters in the associated graded ring reflect back to the local ring. When the graded ring is Cohen--Macaulay, the next lemma shows that a degree-one system of parameters in the graded ring lifts to a full system of parameters in the local ring.

\begin{lemma}\label{3}
Let $ (R, \mathfrak{m}, k)$ be a $d$-dimensional Noetherian local ring with infinite residue field. Let  $A = \mathrm{gr}_{\mathfrak{m}}(R)$ be the associated graded ring with maximal homogeneous ideal $\mathfrak{n}$. Assume that $A$ is Cohen--Macaulay.  Let $J=(\mathrm{in}(x_1),\ldots, \mathrm{in}(x_d))$ be an ideal of $A$ generated by a full system of parameters of degree $1$. Let $\mathfrak{a}=(x_1,\ldots,x_d)$ be an ideal of $R$. Then, $\mathfrak{a}$ is generated by a system of parameters.

\end{lemma}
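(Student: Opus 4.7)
The plan is to prove the statement by induction on $d=\dim R$, with the trivial base case $d=0$ in which the empty sequence already generates a system of parameters of the Artinian ring $R$.

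For the inductive step, I would first observe that since $A$ is Cohen--Macaulay of dimension $d$ and $\mathrm{in}(x_1),\ldots,\mathrm{in}(x_d)$ is a system of parameters, this sequence is an $A$-regular sequence. In particular $\mathrm{in}(x_1)\in A_1$ is a nonzerodivisor on $A$, so by \autoref{colon} we obtain
\[
(\mathfrak{m}^{\,n+1}:_R x_1)=\mathfrak{m}^n \qquad \forall\, n\ge 0,
\]
and $x_1$ is a nonzerodivisor on $R$. This colon equality is equivalent to the identity $(x_1)\cap\mathfrak{m}^n = x_1\mathfrak{m}^{n-1}$ for all $n\ge 1$: indeed, if $b\in (x_1)\cap\mathfrak{m}^n$, write $b=x_1 r$; then $r\in(\mathfrak{m}^n:x_1)=\mathfrak{m}^{n-1}$, so $b\in x_1\mathfrak{m}^{n-1}$.

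The next step is to identify the associated graded ring of $\bar R:=R/(x_1)$ with $A/(\mathrm{in}(x_1))$. Setting $\bar\mathfrak{m}:=\mathfrak{m}/(x_1)$, there is a natural surjection $A\twoheadrightarrow \mathrm{gr}_{\bar\mathfrak{m}}(\bar R)$ sending a class $r+\mathfrak{m}^{n+1}$ to $r+(x_1)+\bar\mathfrak{m}^{n+1}$; clearly $\mathrm{in}(x_1)$ lies in its kernel. The identity $(x_1)\cap\mathfrak{m}^{n+1}=x_1\mathfrak{m}^n$ implies the induced map
\[
A/(\mathrm{in}(x_1))\;\xrightarrow{\;\sim\;}\;\mathrm{gr}_{\bar\mathfrak{m}}(\bar R)
\]
is an isomorphism in each degree. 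Consequently $\mathrm{gr}_{\bar\mathfrak{m}}(\bar R)$ is Cohen--Macaulay of dimension $d-1$, and under this isomorphism the images of $\mathrm{in}(x_2),\ldots,\mathrm{in}(x_d)$ correspond to $\mathrm{in}(\bar x_2),\ldots,\mathrm{in}(\bar x_d)$, where $\bar x_i$ is the image of $x_i$ in $\bar R$. Since $\mathrm{in}(x_1),\ldots,\mathrm{in}(x_d)$ is a system of parameters of $A$, the images of the remaining $d-1$ elements form a degree-$1$ system of parameters of $A/(\mathrm{in}(x_1))$.

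Applying the induction hypothesis to $\bar R$ (whose residue field is still $k$), the elements $\bar x_2,\ldots,\bar x_d$ form a system of parameters of $\bar R$, so the quotient $R/(x_1,\ldots,x_d)=\bar R/(\bar x_2,\ldots,\bar x_d)$ is Artinian. As $\mathfrak{a}=(x_1,\ldots,x_d)$ is $\mathfrak{m}$-primary and has exactly $d=\dim R$ generators, it is generated by a system of parameters. The main technical point is the verification of the isomorphism $A/(\mathrm{in}(x_1))\cong \mathrm{gr}_{\bar\mathfrak{m}}(\bar R)$; everything else is a routine consequence of the Cohen--Macaulay hypothesis on $A$ together with \autoref{colon}.
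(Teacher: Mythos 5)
Your proof is correct and follows essentially the same inductive strategy as the paper's: use \autoref{colon} to see that $x_1$ is a nonzerodivisor on $R$, pass to $\bar R=R/(x_1)$ via the isomorphism $\mathrm{gr}_{\bar\mathfrak{m}}(\bar R)\cong A/(\mathrm{in}(x_1))$, and apply the induction hypothesis. The only difference is that you derive that isomorphism directly from the colon equality (through the intermediate identity $(x_1)\cap\mathfrak{m}^n=x_1\mathfrak{m}^{n-1}$), whereas the paper cites \cite[Exercise 5.3]{Eisenbud} for it; your version is thus slightly more self-contained but otherwise identical.
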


\begin{proof}
 Since $\mathrm{in}(x_1)\in \left [A\right ]_{1}$ is regular, by \autoref{colon}, we have  $x_1$ is a nonzerodivisor on $R$. Thus $\dim R/(x_1)=d-1$. Therefore,  we have a graded isomorphism
$\mathrm{gr}_{\m/(x_1)}(R/(x_1))\ \cong\ A/(\mathrm{in}(x_1))$
\cite[Exercise 5.3]{Eisenbud},
and the right-hand side is Cohen--Macaulay of dimension $d-1$. Moreover, the images of $\mathrm{in}(x_2),\dots,\mathrm{in}(x_d)$ form a (degree-$1$) system of parameters in $A/(\mathrm{in}(x_1))$. By the induction hypothesis applied to $R/(x_1)$, the images $\bar x_2,\dots,\bar x_d$ form a system of parameters there. Hence $\dim R/(x_1,\dots,x_d)=0$, so $\a$ is $\m$-primary and $x_1,\ldots,x_d$ is a system of parameters of $R$.

\end{proof}

The next step is to refine our earlier results by highlighting the connection between integral closures and the associated graded ring. When the graded ring is Cohen--Macaulay, the proposition below demonstrates that the integral closure of specific parameter ideals is precisely the maximal ideal.

\begin{proposition}\label{6}
Let $ (R, \mathfrak{m}, k)$ be a $d$-dimensional Noetherian excellent local ring with infinite residue field. Let  $A = \mathrm{gr}_{\mathfrak{m}}(R)$ be the associated graded ring with maximal homogeneous ideal $\mathfrak{n}$. Assume that $A$ is Cohen--Macaulay. Let $J=(\mathrm{in}(x_1),\ldots, \mathrm{in}(x_d))$ be an ideal of $A$ generated by a full system of parameters of degree $1$ such that $\bar{J}=\mathfrak{n}$. Let $\mathfrak{a}=(x_1,\ldots,x_d)$ be an ideal of $R$. Then, $\bar{\mathfrak{a}}=\mathfrak{m}.$
\end{proposition}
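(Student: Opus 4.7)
The plan is to prove the two inclusions $\bar{\mathfrak{a}}\subseteq\mathfrak{m}$ and $\mathfrak{m}\subseteq\bar{\mathfrak{a}}$ separately. The first is immediate: by \autoref{3}, $\mathfrak{a}$ is generated by a system of parameters, so $\mathfrak{a}$ is $\mathfrak{m}$-primary, which gives $\bar{\mathfrak{a}}\subseteq\sqrt{\mathfrak{a}}=\mathfrak{m}$. The nontrivial direction is $\mathfrak{m}\subseteq\bar{\mathfrak{a}}$, and for this I would show that $\mathfrak{a}$ is a reduction of $\mathfrak{m}$ by verifying $e(\mathfrak{a},R)=e(\mathfrak{m},R)$ and then invoking Rees's criterion for reductions in unmixed local rings.

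To run the multiplicity computation, first note that $R$ is Cohen--Macaulay by \autoref{2}, so since $\mathfrak{a}$ is generated by a system of parameters, $e(\mathfrak{a},R)=\mathrm{length}_R(R/\mathfrak{a})$. Next, because $A$ is Cohen--Macaulay and $\mathrm{in}(x_1),\ldots,\mathrm{in}(x_d)$ is a homogeneous system of parameters, it is a regular sequence on $A$; using \autoref{colon} iteratively, this lifts $x_1,\ldots,x_d$ to a regular sequence on $R$, and an induction using the isomorphism $\mathrm{gr}_{\mathfrak{m}/(x_1)}(R/(x_1))\cong A/(\mathrm{in}(x_1))$ (\cite[Exercise 5.3]{Eisenbud}), as already exploited in the proof of \autoref{2}, gives the graded isomorphism $\mathrm{gr}_{\mathfrak{m}/\mathfrak{a}}(R/\mathfrak{a})\cong A/J$. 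Taking $k$-lengths, and using that $R/\mathfrak{a}$ is Artinian so its length equals the total length of its associated graded, yields $\mathrm{length}_R(R/\mathfrak{a})=\mathrm{length}_A(A/J)$. Since $A$ is Cohen--Macaulay, $\mathrm{length}_A(A/J)=e(J,A)$; the hypothesis $\bar{J}=\mathfrak{n}$ makes $J$ a reduction of $\mathfrak{n}$, so $e(J,A)=e(\mathfrak{n},A)$. Finally, the standard identity $e(\mathfrak{m},R)=e(\mathfrak{n},A)$, which follows because both invariants are read off from the common Hilbert--Samuel function $n\mapsto\mathrm{length}_R(R/\mathfrak{m}^{n+1})$, closes the chain: $e(\mathfrak{a},R)=e(\mathfrak{m},R)$.

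At this point $\mathfrak{a}\subseteq\mathfrak{m}$ are $\mathfrak{m}$-primary ideals of equal multiplicity in the excellent Cohen--Macaulay (hence formally unmixed) local ring $R$, so Rees's theorem yields $\bar{\mathfrak{a}}=\bar{\mathfrak{m}}$; in particular $\mathfrak{m}\subseteq\bar{\mathfrak{m}}=\bar{\mathfrak{a}}$. Combined with $\bar{\mathfrak{a}}\subseteq\mathfrak{m}$ from the first step, this gives $\bar{\mathfrak{a}}=\mathfrak{m}$.

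The main obstacle I anticipate is to justify the graded isomorphism $\mathrm{gr}_{\mathfrak{m}/\mathfrak{a}}(R/\mathfrak{a})\cong A/J$ cleanly: one must check at each inductive step that the image of $\mathrm{in}(x_{i+1})$ in $A/(\mathrm{in}(x_1),\ldots,\mathrm{in}(x_i))$ is still a nonzerodivisor and agrees with the initial form of $\overline{x_{i+1}}$ in $\mathrm{gr}_{\overline{\mathfrak{m}}}(R/(x_1,\ldots,x_i))$, so that the single-step isomorphism of \cite[Exercise 5.3]{Eisenbud} applies again. The remaining ingredients --- invariance of Hilbert--Samuel multiplicity under reductions and Rees's characterization of reductions --- are standard and pose no difficulty.
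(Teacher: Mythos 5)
Your proof is correct, but it takes a genuinely different route from the paper's. The paper proves directly that $\mathfrak{a}$ is a reduction of $\mathfrak{m}$: it takes $r\in\mathfrak{m}^{n+1}$, uses $\mathfrak{n}^{n+1}=J\mathfrak{n}^n$ (valid for $n\gg 0$ since $\bar J=\mathfrak{n}$) to write $[r]=\sum\mathrm{in}(x_i)[g_i]$, lifts this to $r\in\mathfrak{a}\mathfrak{m}^n+\mathfrak{m}^{n+2}$, deduces $\mathfrak{m}^{n+1}=\mathfrak{a}\mathfrak{m}^n+\mathfrak{m}^{n+2}$, and then applies Nakayama to conclude $\mathfrak{m}^{n+1}=\mathfrak{a}\mathfrak{m}^n$; hence $\bar{\mathfrak{a}}=\bar{\mathfrak{m}}=\mathfrak{m}$. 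You instead run a multiplicity computation, chaining $e(\mathfrak{a},R)=\ell(R/\mathfrak{a})=\ell(A/J)=e(J,A)=e(\mathfrak{n},A)=e(\mathfrak{m},R)$ and then invoking Rees's theorem (valid here since a Cohen--Macaulay local ring is formally equidimensional). Both arguments are sound. The paper's version is more elementary and self-contained --- one Nakayama application, no appeal to multiplicity theory or Rees --- and produces the reduction property explicitly rather than through a numerical characterization. Your version is conceptually cleaner in that it reduces the statement to a single numerical identity, at the cost of invoking Rees's theorem and the full package of multiplicity identities.

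One remark on the obstacle you flag. The graded isomorphism $\mathrm{gr}_{\mathfrak{m}/\mathfrak{a}}(R/\mathfrak{a})\cong A/J$ can be obtained more cheaply than by the iterated application of \cite[Exercise 5.3]{Eisenbud}: the modular law gives, for any ideal $\mathfrak{a}\subseteq R$, a canonical isomorphism $\mathrm{gr}_{\mathfrak{m}/\mathfrak{a}}(R/\mathfrak{a})\cong A/\mathrm{gr}_{\mathfrak{m}}(\mathfrak{a})$, and the content of \autoref{PropGenParameterIdeal} is precisely $\mathrm{gr}_{\mathfrak{m}}(\mathfrak{a})=J$. This sidesteps the issue of tracking initial forms through the quotients; since you are not bound by the paper's ordering of results, you could simply cite \autoref{PropGenParameterIdeal} (whose proof is independent of \autoref{6}) and this modular-law identity. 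If you do want the iterative route, your concern is legitimate but resolvable: at each step the image of $\mathrm{in}(x_{i+1})$ in $A/(\mathrm{in}(x_1),\ldots,\mathrm{in}(x_i))$ is a degree-$1$ nonzerodivisor (hence nonzero), which forces $\bar x_{i+1}\notin\mathfrak{m}_i^2$ and therefore $\ord_{\mathfrak{m}_i}(\bar x_{i+1})=1$, so the degree-$1$ components match up under the single-step isomorphism.
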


\begin{proof}
Note that, using \autoref{2} and \autoref{3}, $R$ is Cohen--Macaulay and $x_1, \ldots, x_d$ form a regular sequence in $R$. The condition $\bar{J} = \mathfrak{n}$ implies that $J$ is a minimal reduction of $\mathfrak{n}$ in $A$. Therefore, there exists $n_0$ such that for all $n \geq n_0$,
\[
\mathfrak{n}^{n+1} = J \mathfrak{n}^n.
\]
We need to show that $\mathfrak{a} = (x_1, \dots, x_d)$ is a reduction of $\mathfrak{m}$ in $R$,
\[
\mathfrak{m}^{n+1} = \mathfrak{a} \mathfrak{m}^n, \quad \text{for } n \gg 0.
\]

Let $r \in \mathfrak{m}^{n+1}$. Then its image in the associated graded ring is $[r]:= r+\mathfrak{m}^{n+2}\in \mathfrak{m}^{n+1} / \mathfrak{m}^{n+2} \subseteq \mathfrak{n}^{n+1}$. Since $\mathfrak{n}^{n+1} = J \mathfrak{n}^n$ for $n \geq n_0$, we can write
\[
[r] = \sum_{i=1}^d \mathrm{in}(x_i) [g_i], \quad [g_i] \in \mathfrak{n}^n = \mathfrak{m}^n / \mathfrak{m}^{n+1}.
\]
where $g_i \in \mathfrak{m}^n$ such that $[g_i] = g_i + \mathfrak{m}^{n+1}$. Then,
\[
r - \sum_{i=1}^d x_i g_i \in \mathfrak{m}^{n+2}.
\]
Thus
\[
r \in \mathfrak{a} \mathfrak{m}^n + \mathfrak{m}^{n+2}.
\]
This implies
\[
\mathfrak{m}^{n+1} \subseteq \mathfrak{a} \mathfrak{m}^n + \mathfrak{m}^{n+2}.
\]
Since $\mathfrak{a}\subseteq \mathfrak{m}$, $\mathfrak{a}\mathfrak{m}^n\subseteq \mathfrak{m}^{n+1}$. We have $\mathfrak{m}^{n+2}\subseteq \mathfrak{m}^{n+1}$. Therefore, 
\[
\mathfrak{m}^{n+1} = \mathfrak{a} \mathfrak{m}^n + \mathfrak{m}^{n+2}.
\]
By Nakayama’s Lemma,  
\[
\mathfrak{m}^{n+1} = \mathfrak{a} \mathfrak{m}^n \quad \text{for } n \gg 0.
\]
This shows that $\mathfrak{a}$ is a reduction of $\mathfrak{m}$. Thus, $\overline{\mathfrak{a}} = \mathfrak{m}.$

\end{proof}

With the behavior of integral closures in place, our next step is to clarify the exact correspondence between an ideal in $R$ and its graded analog in the associated graded ring.

\begin{proposition}\label{PropGenParameterIdeal}
Let $ (R, \mathfrak{m}, k)$ be a $d$-dimensional excellent  Noetherian local ring with infinite residue field. Let  $A = \mathrm{gr}_{\mathfrak{m}}(R)$ be the associated graded ring with maximal homogeneous ideal $\mathfrak{n}$. Suppose that $A$ is Cohen--Macaulay. Let $J=(\mathrm{in}(x_1),\ldots, \mathrm{in}(x_d))$ be an ideal of $A$ generated by a full system of parameters of degree $1$. Let $\mathfrak{a}=(x_1,\ldots,x_d)$ be an ideal of $R$. Then, $\mathrm{gr}_{\mathfrak{m}}(\mathfrak{a}) = J.$
\end{proposition}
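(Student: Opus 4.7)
The plan is to prove the two inclusions $J \subseteq \mathrm{gr}_{\mathfrak{m}}(\mathfrak{a})$ and $\mathrm{gr}_{\mathfrak{m}}(\mathfrak{a}) \subseteq J$ separately. The first is immediate: each $\mathrm{in}(x_j) = x_j + \mathfrak{m}^2$ visibly lies in $\mathrm{gr}_{\mathfrak{m}}(\mathfrak{a})$. The reverse inclusion rests on the fact that $\mathrm{in}(x_1), \ldots, \mathrm{in}(x_d)$ is an $A$-regular sequence, which is automatic because $A$ is Cohen--Macaulay of dimension $d$ and these degree-one elements form a system of parameters. My strategy is induction on $d$, with the inductive step handled by quotienting out by the single regular element $\mathrm{in}(x_1)$.

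For the base case $d=1$, the claim $\mathrm{gr}_{\mathfrak{m}}((x_1)) \subseteq (\mathrm{in}(x_1))$ follows quickly from \autoref{colon}: given $r x_1 \in \mathfrak{m}^i$, the identity $(\mathfrak{m}^{n+1}:x_1) = \mathfrak{m}^n$ forces $\mathrm{ord}_{\mathfrak{m}}(r x_1) = \mathrm{ord}_{\mathfrak{m}}(r) + 1$, so the class of $r x_1$ in $A_i$ is either $\mathrm{in}(r)\,\mathrm{in}(x_1) \in (\mathrm{in}(x_1))$ (if $\mathrm{ord}_{\mathfrak{m}}(r) + 1 = i$) or zero (if $\mathrm{ord}_{\mathfrak{m}}(r) + 1 > i$). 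For the inductive step, set $R' := R/(x_1)$ with maximal ideal $\mathfrak{m}' := \mathfrak{m}/(x_1)$, and put $A' := A/(\mathrm{in}(x_1))$. The base case identifies the kernel of the natural surjection $A \twoheadrightarrow \mathrm{gr}_{\mathfrak{m}'}(R')$ with $(\mathrm{in}(x_1))$, giving a canonical isomorphism $\mathrm{gr}_{\mathfrak{m}'}(R') \cong A'$. Under this identification, $A'$ is Cohen--Macaulay of dimension $d-1$, and the images of $\mathrm{in}(x_2), \ldots, \mathrm{in}(x_d)$ form a degree-one system of parameters in $A'$. The inductive hypothesis then yields $\mathrm{gr}_{\mathfrak{m}'}(\bar{\mathfrak{a}}) = \bar{J}$ in $A'$, where $\bar{\mathfrak{a}} := \mathfrak{a}/(x_1)$ and $\bar{J} := J/(\mathrm{in}(x_1))$.

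To descend this equality back to $A$, I pass to the quotient $\pi \colon A \to A'$. A direct filtration computation (using crucially that $x_1 \in \mathfrak{a}$, so that $\mathfrak{a} \cap (\mathfrak{m}^i + (x_1)) = (\mathfrak{a} \cap \mathfrak{m}^i) + (x_1)$) shows $\pi\bigl(\mathrm{gr}_{\mathfrak{m}}(\mathfrak{a})\bigr) = \mathrm{gr}_{\mathfrak{m}'}(\bar{\mathfrak{a}}) = \bar{J}$, and since $\mathrm{in}(x_1) \in J$ the preimage of $\bar{J}$ in $A$ is $J + (\mathrm{in}(x_1)) = J$; hence $\mathrm{gr}_{\mathfrak{m}}(\mathfrak{a}) \subseteq \pi^{-1}(\bar{J}) = J$. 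The main obstacle is this filtration computation: one must carefully track how $\mathfrak{a} \cap \mathfrak{m}^i$ behaves under the passage to $R/(x_1)$. The crucial step $\mathfrak{a} \cap (\mathfrak{m}^i + (x_1)) = (\mathfrak{a} \cap \mathfrak{m}^i) + (x_1)$ follows easily from $x_1 \in \mathfrak{a}$, and the rest is a matter of unwinding definitions together with the base-case identification $\mathrm{gr}_{\mathfrak{m}}((x_1)) = (\mathrm{in}(x_1))$.
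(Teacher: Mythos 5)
Your proposal is correct and follows essentially the same route as the paper: induction on $d$, with the base case handled by \autoref{colon}, the inductive step passing to $A/(\mathrm{in}(x_1)) \cong \mathrm{gr}_{\mathfrak m/(x_1)}(R/(x_1))$, and the conclusion obtained by pulling back along the quotient map and absorbing $(\mathrm{in}(x_1))$ into $J$. The only minor difference is that you claim the equality $\pi(\mathrm{gr}_{\mathfrak m}(\mathfrak a))=\mathrm{gr}_{\mathfrak m'}(\bar{\mathfrak a})$ (which your modular-law observation does justify), whereas the paper's argument gets by with just the inclusion $\varphi(\mathrm{gr}_{\mathfrak m}(\mathfrak a))\subseteq \mathrm{gr}_{\tilde{\mathfrak m}}(\bar{\mathfrak a})$.
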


\begin{proof}
We proceed by induction on $d = \dim R$. If $d=1$, $\mathrm{in}(x_1)$ is regular in $A$. Then, using \autoref{colon}, $x_1$ is a nonzerodivisor in $R$. Since $(x_1)$ is principal, we have that $\mathrm{gr}_{\mathfrak m}((x_1)) = (\mathrm{in}(x_1))$. Thus the claim holds.

Assume the result holds for all rings of dimension $d-1$.
Let $R$ have dimension $d$. It follows from \autoref{3} that $\mathfrak a = (x_1,\dots,x_d)$ is generated by a system of parameters .
Since $\mathrm{in}(x_1)\in A$ is regular, using \autoref{colon}, $x_1$ is a nonzerodivisor on $R$. Let $\tilde{R}= R/(x_1)$ and $\tilde{\m}= \mathfrak m/(x_1)$.
We have that  $\tilde{R}$ is Cohen--Macaulay of dimension $d-1$. We have that
$\mathrm{gr}_{\tilde{\m}}(\tilde{R}) \cong \mathrm{gr}_{\mathfrak m/(x_1)}(R/(x_1)) \cong A / (\mathrm{in}(x_1))$ \cite[Exercise 5.3]{Eisenbud}.

Sally’s machine  gives
$$
\mathrm{depth}\,\mathrm{gr}_{\tilde{\m}}(\tilde{R}) \ge d-1,
$$
so $\mathrm{gr}_{\tilde{\m}}(\tilde{R})$ is Cohen--Macaulay of dimension $d-1$. Let $\bar{x}_i$ denote the image of $x_i$ in $\tilde{R}$ for $i=2,\dots,d$.
Then $(\mathrm{in}(\bar{x}_2),\dots,\mathrm{in}(\bar{x}_d))$ is a parameter ideal of $\mathrm{gr}_{\tilde{\m}}(\tilde{R})$.
By the induction hypothesis applied to $\tilde{R}$ and $\bar{\mathfrak {a}} := (\bar{x}_2,\dots,\bar{x}_d)$, we have
$$
\mathrm{gr}_{\tilde{\m}}(\bar{\mathfrak a}) = (\mathrm{in}(\bar{x}_2),\dots,\mathrm{in}(\bar{x}_d)).
$$

Lifting back to $A$, these are the images of $\mathrm{in}(x_2),\dots,\mathrm{in}(x_d)$ modulo $\mathrm{in}(x_1)$. Indeed, we have the canonical graded surjection 
$$
\varphi:\; A=\operatorname{gr}_{\mathfrak m}(R)\longrightarrow \operatorname{gr}_{\tilde{\mathfrak m}}(\tilde R)
$$
induced by the quotient $R\to\tilde R=R/(x_1)$. Since $A/(\operatorname{in}(x_1))\cong \operatorname{gr}_{\tilde \m}(\tilde R)$, we have $\ker \varphi=(\operatorname{in}(x_1))$. On the $l$-th graded piece, $\varphi$ is given by
$$
r+\m^{\,l+1}\;\longmapsto\;(r+(x_1))+\tilde \m^{\,l+1}, 
$$
where $l=\mathrm{ord}_\m(r)$. In particular, if $x_i\in \m^l\setminus \m^{\,l+1}$, then

$$
\varphi(\operatorname{in}(x_i))=\varphi(x_i+\m^{\,l+1})=(x_i+(x_1))+\tilde \m^{\,l+1}=\bar{x}_i+\tilde \m^{\,l+1}=\operatorname{in}(\bar{x}_i).
$$
Note that for $i=1$, both sides vanish. Therefore, $
\varphi(\operatorname{in}(x_i))=\operatorname{in}(\bar{x}_i)$ for all $i$. It follows that $
\varphi(\operatorname{gr}_{\m}(\a))\;\subseteq\;\operatorname{gr}_{\tilde \m}(\bar \a).
$

By the induction hypothesis,
$$
\varphi(\mathrm{gr}_{\mathfrak m}(\mathfrak a))\subseteq \big(\operatorname{in}(\bar x_2),\dots,\operatorname{in}(\bar x_d)\big)=\varphi\big((\operatorname{in}(x_2),\dots,\operatorname{in}(x_d))\big)=\varphi(J).
$$
Hence, $\varphi(\mathrm{gr}_{\mathfrak m}(\mathfrak a))\subseteq \varphi(J)$. Then,
$$
\mathrm{gr}_{\mathfrak m}(\mathfrak a)\subseteq J+\ker\varphi=J+(\operatorname{in}(x_1)).
$$
However, $J$ already contains $\operatorname{in}(x_1)$, so $J+(\operatorname{in}(x_1))=J$. Therefore, $\mathrm{gr}_{\mathfrak m}(\mathfrak a)\subseteq J$.

The reverse inclusion $J\subseteq \mathrm{gr}_{\mathfrak m}(\mathfrak a)$ is immediate because each $\operatorname{in}(x_i)$ for all $i$ lies in $\operatorname{gr}_{\mathfrak m}(\mathfrak a)$ by definition. Thus, $J\subseteq \mathrm{gr}_{\mathfrak m}(\mathfrak a)$. It follows that 
$$\operatorname{gr}_{\mathfrak m}(\mathfrak a)=(\operatorname{in}(x_1),\dots,\operatorname{in}(x_d))=J,
$$
as required.

\end{proof}

The lemma below establishes that when two associated graded ideals agree, the corresponding ideals in the ring are in fact identical.

\begin{lemma}\label{22}

Let $(R,\mathfrak m, k)$ be a Noetherian local ring and $\mathfrak a\subseteq \mathfrak b$ ideals.
If $\mathrm{gr}_{\mathfrak m}(\mathfrak a)=\mathrm{gr}_{\mathfrak m}(\mathfrak b)$ as graded submodules of $\mathrm{gr}_{\mathfrak m}(R)$, then $\mathfrak a=\mathfrak b$.\end{lemma}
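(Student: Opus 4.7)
The plan is an approximation argument, approximating an arbitrary element of $\mathfrak{b}$ to higher and higher $\mathfrak{m}$-adic order by elements of $\mathfrak{a}$, and then closing the gap with Krull's intersection theorem. Fix $b\in\mathfrak{b}$; we may assume $b\neq 0$, and set $i_0=\ord_{\mathfrak{m}}(b)$. I would construct inductively a sequence $a_0,a_1,\ldots\in\mathfrak{a}$ with $a_n\in\mathfrak{a}\cap\mathfrak{m}^{i_0+n}$ such that the partial sums $s_n:=a_0+\cdots+a_n\in\mathfrak{a}$ satisfy $b-s_n\in\mathfrak{m}^{i_0+n+1}$.

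The inductive step uses the hypothesis one graded component at a time. At stage $n$, setting $s_{-1}=0$, the element $r_n:=b-s_{n-1}$ lies in $\mathfrak{b}$ (since $s_{n-1}\in\mathfrak{a}\subseteq\mathfrak{b}$) and, by the previous step, in $\mathfrak{m}^{i_0+n}$. Hence the class
\[
r_n+\mathfrak{m}^{i_0+n+1}\ \in\ \frac{\mathfrak{b}\cap\mathfrak{m}^{i_0+n}+\mathfrak{m}^{i_0+n+1}}{\mathfrak{m}^{i_0+n+1}}
\]
represents an element of the degree-$(i_0+n)$ piece of $\mathrm{gr}_{\mathfrak{m}}(\mathfrak{b})$. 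By the assumption $\mathrm{gr}_{\mathfrak{m}}(\mathfrak{a})=\mathrm{gr}_{\mathfrak{m}}(\mathfrak{b})$, this piece agrees with the corresponding piece of $\mathrm{gr}_{\mathfrak{m}}(\mathfrak{a})$, so there is $a_n\in\mathfrak{a}\cap\mathfrak{m}^{i_0+n}$ with the same class, i.e.\ $r_n-a_n=b-s_n\in\mathfrak{m}^{i_0+n+1}$, as required.

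From the construction, $b\in s_n+\mathfrak{m}^{i_0+n+1}\subseteq \mathfrak{a}+\mathfrak{m}^{i_0+n+1}$ for every $n\geq 0$. Since $R$ is Noetherian and local, Krull's intersection theorem applied to $R/\mathfrak{a}$ gives $\bigcap_{n\geq 0}(\mathfrak{a}+\mathfrak{m}^n)=\mathfrak{a}$, hence $b\in\mathfrak{a}$. As $b\in\mathfrak{b}$ was arbitrary, $\mathfrak{b}\subseteq\mathfrak{a}$, and combined with the hypothesis $\mathfrak{a}\subseteq\mathfrak{b}$ we conclude $\mathfrak{a}=\mathfrak{b}$.

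The only real bookkeeping is in the inductive step, where one has to match degrees carefully so that the hypothesis is applied to the correct graded component of $\mathrm{gr}_{\mathfrak{m}}(\mathfrak{b})$; the identification of $\mathrm{gr}_{\mathfrak{m}}(\mathfrak{a})$ with the image of $\bigoplus_i (\mathfrak{a}\cap\mathfrak{m}^i+\mathfrak{m}^{i+1})/\mathfrak{m}^{i+1}$ inside $\mathrm{gr}_{\mathfrak{m}}(R)$, established in the remarks preceding the lemma, is exactly what is needed. No deeper input is required.
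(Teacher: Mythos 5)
Your proof is correct, and it closes the argument by a genuinely different route than the paper. The iterative construction is the same in both: starting from $b\in\mathfrak{b}$ of order $i_0$, one repeatedly uses the equality of graded pieces (in the form $\bigl(\mathfrak{a}\cap\mathfrak{m}^i+\mathfrak{m}^{i+1}\bigr)/\mathfrak{m}^{i+1}=\bigl(\mathfrak{b}\cap\mathfrak{m}^i+\mathfrak{m}^{i+1}\bigr)/\mathfrak{m}^{i+1}$, justified by the remark preceding the lemma) to peel off approximants $a_n\in\mathfrak{a}\cap\mathfrak{m}^{i_0+n}$ with $b-\sum_{k\le n}a_k\in\mathfrak{m}^{i_0+n+1}$. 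Where the two arguments diverge is in how they finish: the paper first reduces to the case where $R$ is complete (via a faithful-flatness argument over $\widehat R$), and then sums the series $\sum a_k$ $\mathfrak{m}$-adically, invoking completeness to get a limit in $\mathfrak{a}$. You instead observe that the finite partial sums $s_n$ already lie in $\mathfrak{a}$, so $b\in\bigcap_n\bigl(\mathfrak{a}+\mathfrak{m}^n\bigr)$, and Krull's intersection theorem in $R/\mathfrak{a}$ gives $b\in\mathfrak{a}$ directly. This is cleaner: it removes the completion step entirely and makes explicit the fact that ideals in a Noetherian local ring are $\mathfrak{m}$-adically closed — which is the underlying reason the paper's limit lands in $\mathfrak{a}$ anyway. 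No gap; the bookkeeping in the inductive step is handled correctly.
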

\begin{proof} 

Since $\mathfrak a\subseteq \mathfrak b$, it suffices to show $\mathfrak b\subseteq\mathfrak a$.  We first assume that $R$ is complete.
Take $x\in\mathfrak b$ and let $t=\mathrm{ord}_{\mathfrak m}(x)$. Since the degree–$t$ pieces of the graded ideals agree, $\mathrm{in}(x)\in \big[\mathrm{gr}_{\mathfrak m}(\mathfrak a)\big]_t$. Hence there exists $a_0\in \mathfrak a\cap \mathfrak m^t$ with 
$$
[x]=a_0+y_1,\qquad y_1\in \mathfrak m^{t+1}\cap \mathfrak b.
$$
Since $y_1\in \m^{t+1},$ we consider $[y_1]\in \big[\mathrm{gr}_{\mathfrak m}(\mathfrak b)\big]_t=\big[\mathrm{gr}_{\mathfrak m}(\mathfrak a)\big]_t\subseteq  \m^{t+1}/\m^{t+2}$. Then, there exists
 $a_1\in \mathfrak a\cap \mathfrak m^{t+1}$ with
$$
[y_1]=a_1+y_2,\qquad y_2\in \mathfrak m^{t+2}\cap \mathfrak b.
$$
Therefore, $[x]=a_0+a_1+y_2$. Iterating gives
$$
x=\sum_{i=0}^{n} a_i + y_{n+1},\qquad a_i\in \mathfrak a\cap \mathfrak m^{t+i},\; y_{n+1}\in \mathfrak m^{t+n+1}\cap \mathfrak{b}.
$$

Since $R$ is complete, $\sum_{i\ge 0} a_i$ converges $\mathfrak m$-adically to some $a\in\mathfrak a$ and the remainders tend to zero. Therefore, $x=a\in\mathfrak a$. Thus $\mathfrak b\subseteq\mathfrak a$.
 
Now suppose $R$ is not complete. Then $\widehat{R}$ is faithfully flat over $R$, and we have $\mathfrak a \widehat{R} \cap R = \mathfrak a,$ and $\mathfrak b \widehat{R} \cap R = \mathfrak b.$
Moreover, $\mathrm{gr}_{\mathfrak m}(\mathfrak a)\otimes_R \widehat{R} 
\;\cong\; \mathrm{gr}_{\mathfrak m\widehat{R}}(\mathfrak a \widehat{R}),$
and similarly for $\mathfrak b$. By hypothesis 
$\mathrm{gr}_{\mathfrak m}(\mathfrak a)=\mathrm{gr}_{\mathfrak m}(\mathfrak b)$, 
hence after tensoring with $\widehat{R}$ we obtain
\[
\mathrm{gr}_{\mathfrak m\widehat{R}}(\mathfrak a \widehat{R})
= \mathrm{gr}_{\mathfrak m\widehat{R}}(\mathfrak b \widehat{R}).
\]
Complete case implies $\mathfrak a \widehat{R}=\mathfrak b \widehat{R}$. 
Intersecting with $R$ and using faithful flatness yields $\mathfrak a=\mathfrak b$.

\end{proof}

We recall that $F$-rationality behaves well under flat extensions. This was first established by V\'elez \cite{RationalVelez}, extending earlier work of Hochster and Huneke \cite{HH94}. More recently, Datta and Murayama \cite{Rationalconverse} proved that $F$-rationality descends under arbitrary flat maps of Noetherian rings, without assuming the existence of test elements. For convenience, we record below the two special cases that we need.
\begin{lemma}[{\cite[Theorem 3.1]{RationalVelez}}]\label{velezrational}
Let $\phi: R\to S $  be a smooth homomorphism of locally excellent rings of characteristic $p>0$. If $R$ is $F$-rational, then $S$ is also $F$-rational.    
\end{lemma}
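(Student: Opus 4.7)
The plan is to reduce the statement to the simplest possible local shapes, then invoke the compatibility of tight closure with those shapes. Since $F$-rationality is a local property in the excellent setting---and localization preserves it---I would first pass to a local smooth homomorphism $(R,\mathfrak{p})\to(S,\mathfrak{q})$, where $\mathfrak{q}\in\Spec(S)$ and $\mathfrak{p}=\phi^{-1}(\mathfrak{q})$. Next, by the structure theorem for smooth morphisms, after passing to an \etale{} neighborhood of $\mathfrak{q}$, this homomorphism factors as $R\to R[x_1,\dots,x_n]\to S$, with the second map \etale{}. Thus it suffices to handle two cases separately: polynomial extensions and \etale{} extensions.

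For polynomial extensions, I would argue that $R[x_1,\dots,x_n]$ is Cohen--Macaulay (because the extension is free over $R$), and that any parameter ideal in a localization $R[x_1,\dots,x_n]_{\mathfrak{Q}}$ at a prime $\mathfrak{Q}$ lying over $\mathfrak{p}$ can be built from a system of parameters in $R_\mathfrak{p}$ supplemented by polynomials in the new variables. By faithful flatness and the persistence of test elements for excellent domains, the vanishing $0^*_{H^d_\mathfrak{p}(R_\mathfrak{p})}=0$ transfers to the corresponding vanishing in the polynomial extension. For \etale{} extensions $(R,\mathfrak{p})\to(S,\mathfrak{q})$, the key ingredients are flatness together with $\mathfrak{p}S=\mathfrak{q}$ and the base-change isomorphism $H^d_\mathfrak{p}(R)\otimes_R S\cong H^d_\mathfrak{q}(S)$, compatible with Frobenius; combined with the fact that test elements remain test elements under such extensions, the vanishing of $0^*$ in top local cohomology descends from $R$ to $S$.

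The main obstacle is precisely the persistence and compatibility of test elements under smooth base change, together with the commutation of tight closure with such base changes. This requires the existence of completely stable test elements in excellent domains and a careful Frobenius-compatible analysis of how $0^*$ behaves in the top local cohomology module, which together form the technical core of V\'elez's original argument. Equivalently, one could phrase the same strategy via the test ideal $\tau(R)$: since $F$-rationality is detected by $\tau$ coinciding with the unit ideal (in the Cohen--Macaulay setting), the crux is to show $\tau(S)\supseteq \tau(R)\cdot S$ under smooth maps, which again reduces to the polynomial and \etale{} cases handled above.
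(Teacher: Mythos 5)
The paper does not supply a proof of this lemma; it is cited verbatim from V\'elez \cite[Theorem 3.1]{RationalVelez}, so there is no in-paper argument to compare against. Your outline does track V\'elez's original strategy: localize, factor a smooth map as a polynomial extension followed by an \etale{} map via the local structure theorem, and transfer the vanishing of $0^*$ in top local cohomology using flat base change and persistence of test elements.

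There is, however, a concrete error in the polynomial case. You assert that $R[x_1,\dots,x_n]$ is Cohen--Macaulay ``because the extension is free over $R$.'' This is a non sequitur: freeness over $R$ alone does not yield Cohen--Macaulayness (if $R$ is not CM, then $R[x]$ is not CM either, despite being free over $R$). The correct justification is that an excellent local $F$-rational ring is already Cohen--Macaulay, and polynomial extensions of CM rings are CM. Relatedly, the claim that a system of parameters in $R[x_1,\dots,x_n]_{\mathfrak Q}$ ``can be built from a system of parameters in $R_{\mathfrak p}$ supplemented by polynomials in the new variables'' is not true as stated for a general prime $\mathfrak Q$ lying over $\mathfrak p$; it needs the dimension formula for the fiber and a more careful choice of parameters. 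Finally, what you flag as ``the technical core''---persistence of completely stable test elements and Frobenius-compatible behavior of $0^*$ under smooth/\etale{} base change---is precisely the nontrivial content of V\'elez's argument, so the sketch is a correct high-level map of the territory but defers the actual work to the hardest steps.
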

\begin{lemma}[{\cite[(6) on p.440]{RationalVelez}},{\cite[Proposition A.5]{Rationalconverse}}]\label{datta}
Let $\phi: R \to S$ be a faithfully flat homomorphism of Noetherian rings of prime characteristic $p > 0$. If $S$ is $F$-rational, then so is $R$.   
\end{lemma}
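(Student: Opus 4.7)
The plan is to reduce to a faithfully flat local homomorphism and then push the tight-closure obstruction from $R$ into $S$, where the $F$-rationality hypothesis closes the argument.

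First, I would use the fact that $F$-rationality is local on $\Spec(R)$: it suffices to show that $R_{\mathfrak p}$ is $F$-rational for every $\mathfrak p\in\Spec(R)$. By faithful flatness, choose a prime $\mathfrak q\in\Spec(S)$ with $\mathfrak q\cap R=\mathfrak p$; the induced map $R_{\mathfrak p}\to S_{\mathfrak q}$ is then a faithfully flat local homomorphism, and $S_{\mathfrak q}$ inherits $F$-rationality from $S$ since $F$-rationality is preserved under localization. So I may assume $\phi:(R,\mathfrak m)\to(S,\mathfrak n)$ is a faithfully flat local homomorphism of Noetherian rings of characteristic $p$ with $S$ $F$-rational, and I must show that every parameter ideal of $R$ is tightly closed.

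Let $J=(x_1,\ldots,x_d)\subseteq R$ be a parameter ideal, and take $u\in J^*_R$. By definition there exists $c\in R^{\circ}$ with $cu^{p^e}\in J^{[p^e]}$ for all $e\gg 0$. Applying $\phi$ yields $\phi(c)\phi(u)^{p^e}\in (JS)^{[p^e]}$ in $S$. Going-down for flat homomorphisms shows that every minimal prime of $S$ contracts to a minimal prime of $R$, so $c\in R^{\circ}$ forces $\phi(c)\in S^{\circ}$, and hence $\phi(u)\in (JS)^*_S$. The dimension formula $\dim S=\dim R+\dim(S/\mathfrak m S)$, combined with the faithful flatness of $R/J\to S/JS$, gives $\dim(S/JS)=\dim S-d$, so $x_1,\ldots,x_d$ form a partial system of parameters in $S$.

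To conclude, I would show $(JS)^*_S=JS$. Since $S$ is $F$-rational, it is Cohen--Macaulay, and $x_1,\ldots,x_d$ extends to a full system of parameters $x_1,\ldots,x_d,y_1,\ldots,y_r$ of $S$. The full parameter ideal $(x_1,\ldots,x_d,y_1,\ldots,y_r)$ is tightly closed by $F$-rationality of $S$, and a standard colon-capturing argument for tight closure on parameter ideals in a Cohen--Macaulay ring descends this tight-closedness to the partial ideal $(x_1,\ldots,x_d)S=JS$. Hence $\phi(u)\in JS$, and faithful flatness yields $u\in JS\cap R=J$. This establishes $J^*_R=J$, so $R$ is $F$-rational. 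The main obstacle is precisely this last step: propagating tight-closedness from a full parameter ideal to one generated by a partial system of parameters, which relies on the nontrivial theorem that $F$-rationality implies Cohen--Macaulayness and on the colon-capturing property of tight closure on parameter ideals.
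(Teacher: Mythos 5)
The paper does not prove this lemma; it is cited from V\'elez and from Datta--Murayama, so there is no in-house proof to compare against. Your argument is the classical descent argument for $F$-rationality and its core is correct: localize to a faithfully flat local map $R_{\mathfrak p}\to S_{\mathfrak q}$, take $u\in J^*_R$ with $cu^{q}\in J^{[q]}$ for $c\in R^\circ$, push into $S$ to get $\phi(u)\in (JS)^*_S$ (using that minimal primes of $S$ contract to minimal primes of $R$ under flatness, so $\phi(c)\in S^\circ$), verify via the flat dimension formula that $JS$ is a parameter ideal of $S$, conclude $\phi(u)\in JS$, and contract $JS\cap R=J$. That is exactly the mechanism in the cited sources.

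Two remarks. First, your closing step is overbuilt. You invoke ``$S$ $F$-rational $\Rightarrow$ $S$ Cohen--Macaulay'' and then a colon-capturing descent from a full system of parameters to the partial one $JS$. Neither is needed: with the definition of $F$-rationality used in the paper (every ideal generated by part of a system of parameters is tightly closed), the moment you know that $x_1,\dots,x_d$ is a partial system of parameters in $S$ you have $(JS)^*_S=JS$ outright. The detour through Cohen--Macaulayness quietly imports the theorem that $F$-rationality implies Cohen--Macaulayness, which itself needs $S$ to be a homomorphic image of a Cohen--Macaulay ring (e.g. excellent), an assumption not in the statement.

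Second, the reduction to the local case uses that $F$-rationality passes from $S$ to $S_{\mathfrak q}$. This is true but not free: localization of $F$-rationality is classically proved under (local) excellence via test elements (V\'elez, Smith), and removing those hypotheses is precisely the content of the Datta--Murayama appendix the paper cites. So as written your proof establishes the lemma for excellent rings, which in fact covers every application in this paper since $F$-finite rings are excellent; for the full Noetherian generality in the statement you would either invoke Datta--Murayama for the localization step or restructure to avoid localizing $S$.
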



As preparation for the main theorem, we recall a significant result that expresses $F$-rationality via the notions of $F$-thresholds and tight closure.

\begin{theorem}[{\cite[Corollary 3.2]{HMTW}}] \label{7}
Let $(R,\mathfrak{m},k)$ be a $d$-dimensional excellent analytically irreducible Noetherian local domain of characteristic $p > 0$, and let $J = (x_1,\ldots, x_d)$ be an ideal generated by a full system of parameters in $R$. Given an ideal $I \supseteq J$, we have $I \subseteq J^*$ if and only if $c^I(J ) = d$. In particular, $R$ is $F$-rational if and only if $c^I(J )<d$ for every ideal $I \supsetneq J$.

\end{theorem}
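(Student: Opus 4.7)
The plan is to establish the biconditional $I\subseteq J^*\iff c^I(J)=d$ for any $I\supseteq J$ with $J$ a parameter ideal; the stated $F$-rationality criterion then follows at once, since $R$ is $F$-rational iff $J=J^*$ for every parameter ideal $J$, which, in view of the general inequality $c^I(J)\le d$, is equivalent to the strict inequality $c^I(J)<d$ holding for every $I\supsetneq J$. So the work is reduced to the equivalence, together with establishing the baseline bound $c^I(J)\le d$ itself.

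For the direction $I\subseteq J^*\Rightarrow c^I(J)\le d$, I would use that an excellent analytically irreducible local domain admits a nonzero test element $c\in R^{\circ}$, which converts $I\subseteq J^*$ into $cI^{[p^e]}\subseteq J^{[p^e]}$ for all $e\ge 0$. A pigeonhole on monomial exponents in a fixed set of generators of $I$ gives the standard inclusion $I^{\mu(p^e-1)+1}\subseteq I^{[p^e]}$, where $\mu$ is the minimal number of generators of $I$; combining this with the previous display yields $cI^{\mu(p^e-1)+1}\subseteq J^{[p^e]}$. Refining the exponent via a Briançon--Skoda style argument using $\mathrm{ht}(J)=d$, together with the length formula $\ell(R/J^{[p^e]})=p^{ed}\ell(R/J)$ valid for parameter ideals, upgrades the bound to $\nu^J_I(p^e)\le d\,p^e+O(1)$, hence $c^I(J)\le d$. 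Sharpness (i.e.\ equality when $I\subseteq J^*$ but $I\not\subseteq J$) is then checked by exhibiting a specific element of $I^{dp^e-O(1)}$ outside $J^{[p^e]}$.

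For the converse $c^I(J)=d\Rightarrow I\subseteq J^*$, I would argue contrapositively. Suppose $x\in I\setminus J^*$; the goal is to force a quantitative gap $\nu^J_I(p^e)\le (d-\varepsilon)p^e$. The key tool is the colon-capturing property of parameter ideals in analytically irreducible domains, which says that the obstruction $x\notin J^*$ is detected by a nonvanishing Frobenius orbit in the top local cohomology $H^d_{\m}(R)$. Translating this nonvanishing into an effective length estimate on $R/J^{[p^e]}$---using again the Hilbert--Samuel growth of $\ell(R/J^{[p^e]})$---produces the needed gap, contradicting $c^I(J)=d$. The main obstacle lies precisely here: converting the qualitative statement $x\notin J^*$ into a quantitative gap of order $\varepsilon p^e$ in the containment threshold is where the excellence and analytic irreducibility hypotheses are essential, as they are what guarantees the existence of test elements and the top-degree local cohomology description of tight closure.
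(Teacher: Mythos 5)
This statement is a cited result---\cite[Corollary~3.2]{HMTW}---and the paper offers no proof of its own, so there is no in-paper argument to compare against. Evaluating your sketch on its own terms, there is a genuine gap, and in fact a foundational confusion in the notation that undermines both directions.

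Recall that the paper defines $c^I(J)$ via $\nu^I_J(p^e)=\max\{\,t : J^t\not\subseteq I^{[p^e]}\,\}$: powers of the parameter ideal $J$ are tested against Frobenius powers of the larger ideal $I$. Your sketch, however, works throughout with $\nu^J_I$, i.e.\ with the containments $I^t\subseteq J^{[p^e]}$. The test-element step $cI^{[p^e]}\subseteq J^{[p^e]}$, the pigeonhole $I^{\mu(p^e-1)+1}\subseteq I^{[p^e]}$, and the claimed conclusion $\nu^J_I(p^e)\le dp^e+O(1)$ are all statements about $c^J(I)$, not $c^I(J)$, so they do not establish the inequality you announce. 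The ``baseline bound'' $c^I(J)\le d$ that you need is actually immediate from $J\subseteq I$ (hence $J^{[p^e]}\subseteq I^{[p^e]}$) together with $J^{d(p^e-1)+1}\subseteq J^{[p^e]}$, and requires none of the test-element machinery; what the tight closure hypothesis must give you is the reverse bound $c^I(J)\ge d$, i.e.\ that $I^{[p^e]}$ is not appreciably larger than $J^{[p^e]}$ when $I\subseteq J^*$, so that $J^{\approx dp^e}$ still escapes it. Your sketch never engages with this. Two further problems: the length identity $\ell(R/J^{[p^e]})=p^{ed}\ell(R/J)$ is a Cohen--Macaulay statement and is in general only an inequality for arbitrary parameter ideals; and for the converse direction $c^I(J)=d\Rightarrow I\subseteq J^*$ you explicitly concede that converting $x\notin J^*$ into the required quantitative gap is ``the main obstacle'' without resolving it---this is the genuine content of HMTW's Corollary~3.2 and cannot simply be deferred. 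As written the argument does not prove the stated equivalence; if you want to fill it in, the starting point should be to show $c^J(J)=d$ using the monomial $x_1^{p^e-1}\cdots x_d^{p^e-1}$ and colon-capturing for parameter ideals, then show that passing from $J$ to $I\subseteq J^*$ in the exponent of the Frobenius power changes $\nu(p^e)$ only by $O(1)$, and finally tackle the converse via a Hilbert--Kunz multiplicity comparison between $J$ and $I$.
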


Having assembled the required ingredients, we are prepared to present the main theorem of this section.

\begin{corollary}\label{21}
Let $ (R, \mathfrak{m}, k)$ be a $d $-dimensional $F$-finite Noetherian local domain of characteristic $p > 0$. Let  $A = \mathrm{gr}_{\mathfrak{m}}(R)$ be the associated graded ring with maximal homogeneous ideal $\mathfrak{n}$. If $A$ is $F$-rational, then $R$ is $F$-rational.

\end{corollary}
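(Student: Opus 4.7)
The strategy is to characterize $F$-rationality through \autoref{7} and to feed in the $F$-rationality of $A$ via the comparison in \autoref{20}. Preliminarily, $R$ is excellent because it is $F$-finite; since $A$ is $F$-rational it is Cohen--Macaulay and normal, hence a domain (localizations of normal local rings are domains, and $A$ injects into $A_{\mathfrak n}$). By \autoref{2}, $R$ is Cohen--Macaulay, and the standard initial-form argument then shows $\widehat{R}$ is also a domain, so $R$ is analytically irreducible; the analogous argument applied to $A_{\mathfrak n}$ shows it is analytically irreducible too, so both $R$ and $A_{\mathfrak n}$ satisfy the hypotheses of \autoref{7}. We may further assume $k$ is infinite by replacing $R$ with $R'=R[X]_{\mathfrak m R[X]}$: this extension is smooth and faithfully flat, $\mathrm{gr}_{\mathfrak m R'}(R')\cong A\otimes_k k(X)$ remains $F$-rational by \autoref{velezrational}, and the $F$-rationality of $R$ will descend from that of $R'$ by \autoref{datta}.

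With $k$ infinite, pick a minimal reduction $J=(\mathrm{in}(x_1),\ldots,\mathrm{in}(x_d))$ of $\mathfrak n$ in $A$ with degree-one generators, and set $\mathfrak a=(x_1,\ldots,x_d)\subseteq R$. By \autoref{3}, \autoref{PropGenParameterIdeal}, and \autoref{6}, the sequence $x_1,\ldots,x_d$ is a full system of parameters of $R$, $\mathrm{gr}_{\mathfrak m}(\mathfrak a)=J$, and $\overline{\mathfrak a}=\mathfrak m$. Fix any ideal $I$ of $R$ with $I\supsetneq \mathfrak a$, and set $I_A=\mathrm{gr}_{\mathfrak m}(I)$. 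Since $\mathfrak a\subsetneq I$, the contrapositive of \autoref{22} upgrades the inclusion $J\subseteq I_A$ to a strict inclusion $J\subsetneq I_A$. Chaining \autoref{9}(2), used in $R$ via $\overline{\mathfrak a}=\mathfrak m$ and in $A$ via $\overline J=\mathfrak n$, with \autoref{20} applied to $\mathfrak b=I$ yields
\[
c^{I}(\mathfrak a)\;=\;c^{I}(\mathfrak m)\;\le\;c^{I_A}(\mathfrak n)\;=\;c^{I_A}(J).
\]
The $F$-rationality of $A$, together with \autoref{7} applied to $A_{\mathfrak n}$, then gives $c^{I_A}(J)<d$, whence $c^{I}(\mathfrak a)<d$. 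Since $I\supsetneq \mathfrak a$ was arbitrary, \autoref{7} applied to $R$ concludes that $R$ is $F$-rational.

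The main obstacle is to preserve strict inclusions through the passage to the associated graded ring, so that the strict form of the threshold criterion is genuinely available; this is the role of \autoref{22}. A secondary point is to verify the analytic-irreducibility hypothesis of \autoref{7} for both $R$ and $A_{\mathfrak n}$: this follows because $F$-rational excellent local rings are normal (and local normal rings are domains), while the initial-form argument lifts the domain property from $A$ to $\widehat R$ and from $A_{\mathfrak n}$ to $\widehat{A_{\mathfrak n}}$.
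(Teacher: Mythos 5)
Your proof is correct and follows essentially the same route as the paper's: reduce to infinite residue field via \autoref{velezrational} and \autoref{datta}, pick a degree-one minimal reduction $J$ of $\mathfrak n$ and the corresponding $\mathfrak a\subseteq R$, apply \autoref{PropGenParameterIdeal}, \autoref{6}, \autoref{22}, and then chain \autoref{9}, \autoref{20}, and \autoref{7}. You do add one genuinely useful detail that the paper's proof leaves implicit: you verify that $R$ and $A_{\mathfrak n}$ are analytically irreducible (via $A$ being a normal domain and the initial-form argument lifting the domain property to completions), which is needed to invoke \autoref{7} as stated.
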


\begin{proof}
We first reduce to the case where the residue field $k = R/\mathfrak{m}$ is infinite. If $k$ is already infinite, we proceed directly. Otherwise, we replace $R$ by the local ring 
$$
R' := R[X]_{\mathfrak mR[X]}, \qquad k' \cong k(X).
$$
Then $R'$ is a local ring with infinite residue field $k'$, and the map $R\to R'$ is smooth, hence faithfully flat. Moreover,
\[
A' := \mathrm{gr}_{\mathfrak{m}R'}(R') \;\cong\; \mathrm{gr}_{\mathfrak{m}}(R) \otimes_k k' \;\cong\; A \otimes_k k'.
\]
Since $A$ is $F$-rational and locally excellent, \autoref{velezrational} applied to the smooth extension $A \to A'$ shows that $A'$ is $F$-rational as well. Once the theorem is established for local rings with infinite residue field, it follows that $R'$ is $F$-rational. Finally, since $R \to R'$ is faithfully flat, \autoref{datta} implies that $R$ itself is $F$-rational. From now on, we assume that $k$ is infinite.

Let $J=(\mathrm{in}(x_1),\ldots, \mathrm{in}(x_d))$ be an ideal of $A$ generated by a full system of parameters such that $\bar{J}=\mathfrak{n}$, where $\bar{J}$ denotes the integral closure of $J$. Assume $\mathfrak{a}=(x_1,\ldots,x_d)$ and $\mathfrak{a} \subsetneq \mathfrak{b}$ be ideals of $R$. Using \autoref{PropGenParameterIdeal}, $\mathrm{gr}_\m(\a)=J$.  Then by \autoref{6}, we have $\bar{\mathfrak{a}}=\mathfrak{m}$. Now, consider $I = \mathrm{gr}_{\mathfrak{m}}(\mathfrak{b}) \subseteq A$. Since $\mathfrak{a} \subsetneq \mathfrak{b}$, by \autoref{22}, $J = \mathrm{gr}_{\mathfrak{m}}(\mathfrak{a})\subsetneq I=\mathrm{gr}_{\mathfrak{m}}(\mathfrak{b})$. 

Since $ \overline{\mathfrak{a}} = \mathfrak{m} $, by \autoref{9}, $c^{\mathfrak{b}}(\mathfrak{a}) = c^{\mathfrak{b}}(\mathfrak{m}).$
Similarly, since $ \bar{J} = \mathfrak{n} $, we have $c^I(J) = c^I(\mathfrak{n})$. Since $A$ is $F$-rational, $ c^I(J)<d$ by \autoref{7}. Then, by \autoref{20}, we have
\[
c^{\mathfrak{b}}(\mathfrak{a})=c^{\mathfrak{b}}(\mathfrak{m}) \leq c^I(\mathfrak{n})=c^I(J)<d.
\]
Therefore, $R$ is $F$-rational.
\end{proof}

\begin{remark}\label{otherproof}
We now present a sketch of proof  for \autoref{21} given by an anonymous referee, which we could not find in the published literature.
If $\gr_{\m}(R)$ is $F$-rational, then the extended Rees algebra $\mathcal{R} := R[\m t, t^{-1}]$ localized at the maximal ideal $(\m + \m t + t^{-1})$ is $F$-rational by the deformation property. Localizing further at the prime ideal $(\m + \m t)$ shows that $R(t)$ is $F$-rational. In particular, since $R$ is obtained from $R(t)$ by specializing $t=1$, we conclude that $R$ itself is $F$-rational.
We also note that this result was recorded by Ajit and Simper in a recent preprint \cite[Theorem 6.1(2) and Remark 6.2]{AS}.
\end{remark}

\begin{example}
Let $k$ be a field of characteristic $p>0$. Let $X=(x_{ij})$ be a $2\times3$ matrix of indeterminates and let $\Delta_{ij}$ denote the $2\times2$ minor of the columns $i<j$. Set
\[
A=\frac{k[x_{ij}]}{(\Delta_{12},\Delta_{13},\Delta_{23})}
\qquad\text{and}\qquad
R=\frac{k\llbracket x_{ij}\rrbracket}{(\Delta_{12}+M,\ \Delta_{13},\ \Delta_{23})},
\]
where $M=\prod_{i=1}^{2}\prod_{j=1}^{3}x_{ij}$ and $\mathfrak m=(x_{ij})R$.

Since $\ord_{\mathfrak m}(\Delta_{ij})=2$ for all $i<j$ while $\ord_{\mathfrak m}(M)=6$, 
\[
\mathrm{gr}_{\mathfrak m}(R)\cong \frac{k[x_{ij}]}{(\Delta_{12},\Delta_{13},\Delta_{23})}=A,
\]
with homogeneous maximal ideal $\mathfrak n=(\overline{x}_{ij})$.
Since $\ord_{\mathfrak m}(\Delta_{ij})=2$ for all $i<j$ while $\ord_{\mathfrak m}(M)=6$, 
\[
\mathrm{gr}_{\mathfrak m}(R)\cong \frac{k[x_{ij}]}{(\Delta_{12},\Delta_{13},\Delta_{23})}=A,
\]
with homogeneous maximal ideal $\mathfrak n=(\overline{x}_{ij})$.

It is well-known that $A$ is strongly $F$-regular \cite{HH87} (hence, $F$-rational), but it is not Gorenstein. Since $\mathrm{gr}_{\mathfrak m}(R)\cong A$ is $F$-rational, it follows that $R$ is $F$-rational.
\end{example}

\bibliographystyle{skalpha}
\bibliography{References1}
\end{document}